\title{Pattern Avoidance Over a Hypergraph}
\author{Maxwell Fishelson, Benjamin Gunby\\maxfish@mit.edu, bg570@connect.rutgers.edu}
\date{}
\newcommand{\p}[1]{\left({#1}\right)}
\newcommand{\card}[1]{\left|{#1}\right|}
\newcommand{\cC}{\mathcal{C}}
\newcommand{\cH}{\mathcal{H}}
\mathchardef\mhyphen="2D
\newtheorem{theorem}{Theorem}
\newtheorem{mydef}[theorem]{Definition}
\newtheorem{proposition}[theorem]{Proposition}
\newtheorem{lemma}[theorem]{Lemma}
\newtheorem*{thm}{Theorem}
\newtheorem*{lemma*}{Lemma}
\theoremstyle{remark}
\newtheorem*{remark}{Remark}
\newtheorem{step}{Step}
\numberwithin{theorem}{section}
\begin{document}

\maketitle

\begin{abstract}
A classic result of Marcus and Tardos (previously known as the Stanley-Wilf conjecture) bounds from above the number of $n$-permutations ($\sigma \in S_n$) that do not contain a specific sub-permutation. In particular, it states that for any fixed permutation $\pi$, the number of $n$-permutations that avoid $\pi$ is at most exponential in $n$. In this paper, we generalize this result.  We bound the number of avoidant $n$-permutations even if they only have to avoid $\pi$ at specific indices.  We consider a $k$-uniform hypergraph $\Lambda$ on $n$ vertices and count the $n$-permutations that avoid $\pi$ at the indices corresponding to the edges of $\Lambda$.  We analyze both the random and deterministic hypergraph cases.  This problem was originally proposed by Asaf Ferber.\\

When $\Lambda$ is a random hypergraph with edge density $\alpha$, we show that the expected number of $\Lambda$-avoiding $n$-permutations is bounded (both upper and lower) as $\exp(O(n))\alpha^{-\frac{n}{k-1}}$, using a supersaturation version of F\"{u}redi-Hajnal.\\

In the deterministic case we show that, for $\Lambda$ containing many size $L$ cliques, the number of $\Lambda$-avoiding $n$-permutations is $O\p{\frac{n\log^{2+\epsilon}n}{L}}^n$, giving a nontrivial bound with $L$ polynomial in $n$.  Our main tool in the analysis of this deterministic case is the new and revolutionary hypergraph containers method, developed in \cite{BMS} and \cite{ST}.
\end{abstract}

\section{Introduction}

Formally, the notion of pattern avoidance is defined as follows.

\begin{mydef}
An $n$-permutation $\sigma$ {\bf contains} a $k$-permutation $\pi$ iff there exist integers $1 \leq x_1 < x_2 < \cdots x_k \leq n$ such that
$$\pi(i)<\pi(j) \Leftrightarrow \sigma(x_i)<\sigma(x_j)$$
for all $i,j$.  Otherwise, we say $\sigma$ {\bf avoids} $\pi$.
\end{mydef}

In the late 1980s/early 1990s, Richard P. Stanley and Herbert Wilf independently conjectured that for every permutation $\pi$, there exists a constant $c_\pi$ such that the number of $n$-permutations avoiding $\pi < c_\pi^n$ for all $n$.  As there are $n!=e^{(1-o(1))n\log n)}$ permutations, this bound is non-trivial.  This conjecture was later proven by Marcus and Tardos \cite{MT} in 2003.

To generalize this result, we first generalize our notion of pattern avoidance in order to account for avoidance only at specific index sets.

\begin{mydef}
Let $\Lambda$ be a $k$-uniform hypergraph on vertex set $\{1,2,\cdots,n\}$.  We say an $n$-permutation $\sigma$ {\bf $\Lambda$-contains} a $k$-permutation $\pi$ iff there exist integers $1 \leq x_1 < x_2 < \cdots x_k \leq n$ such that
$$\pi(i)<\pi(j) \Leftrightarrow \sigma(x_i)<\sigma(x_j)$$
for all $i,j$ {\bf AND} $\{x_1,\cdots,x_k\} \in E(\Lambda)$.  Otherwise, we say $\sigma$ {\bf $\Lambda$-avoids} $\pi$.
\end{mydef}

In this paper, we analyze the generalized $\Lambda$-avoidance problem for both random hypergraphs and fixed hypergraphs, a problem originally posed by Asaf Ferber \cite{Personal}. When $\Lambda$ is a random hypergraph with edge density $\alpha$, we show that, for every permutation fixed $k\in\mathbb{Z}^+$ and $\pi\in S_k$, the number of $\Lambda$-avoiding $n$-permutations is $\exp(O(n))\alpha^{-\frac{n}{k-1}}$ in expectation.  We also show that, for fixed $\Lambda$, the number of $n$-permutations $\Lambda$-avoiding $\pi$ is $O\p{\frac{n\log^{2+\epsilon}n}{L}}^n$ for all $\epsilon > 0$, as long as $\Lambda$ is $k$-uniform and satisfies the following:\\

$\Lambda$ contains a collection of $L$-vertex cliques where each of the $n$ vertices belongs to at least $\delta(\Lambda) \geq 1$ cliques in the collection and at most $\Delta(\Lambda) = O(1)$.\\

We see that, for $L=n^{\Omega(1)}$, this bound is a non-negligible improvement on the $n^{(1-o(1))n}$ total $n$-permutations.\\

A few years after the proposal of Stanley-Wilf, in 1992, Zolt\'an F\"uredi and P\'eter Hajnal proposed a similar conjecture \cite{FH} that extended the notion of pattern-avoiding permutations to pattern-avoiding matrices.  Essentially, a $0\mhyphen 1$ matrix $A$ of size $n\times n$ contains a $0\mhyphen 1$ matrix $P$ of size $k \times k$ if there exists a $k \times k$ submatrix of $A$ that has 1-entries at all the locations where $P$ has 1-entries.  Formally,

\begin{mydef}
For a $0\mhyphen 1$ matrix $A$ of size $n\times n$ and a $k \times k$ $0\mhyphen 1$ matrix $P$, we say that $A$ {\bf contains} $P$ iff there exist row indices $1 \leq x_1 < x_2 < \cdots x_k \leq n$ and column indices $1 \leq y_1 < y_2 < \cdots y_k \leq n$ such that
$$P_{ij} = 1 \Rightarrow A_{x_iy_j} = 1$$
for all $i,j$.  Otherwise, we say $A$ {\bf avoids} $P$.  We note that, for $A$ to contain $P$, we don't require that $P$ be a submatrix of $A$, but that the 1-entries of $P$ be present in a submatrix of $A$.
\end{mydef}

The F\"uredi-Hajnal conjecture states that, if a $0\mhyphen 1$ matrix $A$ of size $n\times n$ avoids a permutation matrix $P_\pi$, it has $<c_Pn$ 1-entries for some constant $c_P$ in terms of $\pi$.  Progress was first made on these conjectures by Martin Klazar in 2000 \cite{K}, who showed that the F\"uredi-Hajnal conjecture implies the Stanley-Wilf conjecture.  Then, in 2004, Adam Marcus and G\'abor Tardos proved the F\"uredi-Hajnal conjecture \cite{MT}.  Combined with Klazar's arguments, a proof of the Stanley-Wilf conjecture was finally achieved.\\

This notion of pattern-avoiding matrices parallels that of pattern-avoiding permutations, as a permutation $\sigma$ contains a permutation $\pi$ if and only if the permutation matrix $P_\sigma$ contains the permutation matrix $P_\pi$.  The notion of $\Lambda$-avoidance can also be extended to this matrix context, where $A$ must only avoid $P$ on submatrices whose columns correspond to an edge in $\Lambda$.  Viewing pattern avoidance in this matrix context was the key to proving the Stanley-Wilf conjecture and will be one of the main insights in our analysis.\\

\section{Main Results}
When $\Lambda$ is a random hypergraph, we will prove the following bound.
\begin{theorem}\label{randomcasecor}
Let $k\in\mathbb{Z}$ with $k>1$, and take $\pi\in S_k$. Then there is some constant $C=C(\pi)$ such that if $\Lambda$ is the $k$-uniform Erd\H{o}s-R\'{e}nyi random hypergraph on $n$ vertices with edge probability $\alpha\in (0,1]$, then the expected number of $\sigma\in S_n$ that $\Lambda$-avoid $\pi$ is at most
\[\exp(Cn)\alpha^{-\frac{n}{k-1}}.\]
Furthermore, for $\alpha\geq n/\binom{n}{k}$, this bound is sharp to within an exponential factor; that is, up to a modification in $C$ (making it potentially negative).
\end{theorem}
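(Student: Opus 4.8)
The plan is to compute the expectation exactly and reduce the whole theorem to one combinatorial estimate. For $\sigma\in S_n$ write $P(\sigma)$ for the set of $k$-subsets $\{x_1<\cdots<x_k\}\subseteq[n]$ on which $\sigma$ is order-isomorphic to $\pi$. Since $\sigma$ $\Lambda$-avoids $\pi$ precisely when $P(\sigma)\cap E(\Lambda)=\varnothing$, and the $\binom{n}{k}$ possible edges of $\Lambda$ are independent $\mathrm{Bernoulli}(\alpha)$, linearity of expectation gives
\[
\mathbb{E}\bigl[\,\#\{\sigma\in S_n:\sigma\text{ }\Lambda\text{-avoids }\pi\}\,\bigr]=\sum_{\sigma\in S_n}(1-\alpha)^{|P(\sigma)|}.
\]
Thus both directions of the theorem reduce to two-sided estimates of this deterministic sum, whose behaviour is governed by the counting function $A_M:=\#\{\sigma\in S_n:|P(\sigma)|\le M\}$.

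The central ingredient, which I would establish first, is that there is $C_0=C_0(\pi)$ with
\[
A_M\le \exp(C_0 n)\,\max\left\{1,\left(\tfrac{M}{n}\right)^{n/(k-1)}\right\}\qquad\text{for all }M\ge 0,
\]
and this is essentially tight. I would prove it by a Marcus--Tardos-type contraction: choose $\ell$ with $\ell^{k-1}\asymp M/n$, partition both positions and values of $[n]$ into $t=n/\ell$ consecutive blocks, and let $N$ be the $t\times t$ $0$--$1$ matrix marking which of the $\ell\times\ell$ cells of $P_\sigma$ contain a point. Deleting one entry from each of a maximum cell-disjoint family of copies of $P_\pi$ in $N$ and iterating the Marcus--Tardos theorem \cite{MT}, that family has size $\ge(e(N)-c_\pi t)/k$; since distinct cell-disjoint copies of $P_\pi$ in $N$ yield distinct copies of $\pi$ in $\sigma$, we get $e(N)\le c_\pi t+kM=O(M)$ once $M\ge n$. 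One then reconstructs $\sigma$ from $N$ together with its cell contents. The delicate point — and the main obstacle — is to carry this out with only $\exp(O(n))$ overhead rather than the naive $\exp(O(n\log n))$: this uses the known single-exponential bound on the number of sparse $P_\pi$-avoiding $0$--$1$ matrices (Klazar \cite{K}, Marcus--Tardos \cite{MT}) to control the number of admissible $N$, together with the right choice of $\ell$, which is exactly what produces the $(M/n)^{n/(k-1)}$ factor; one must also account for copies of $\pi$ straddling several blocks, which for sum-decomposable $\pi$ requires decomposing $\pi$ into its sum-indecomposable summands.

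Granting the estimate, the upper bound follows from a short computation. Abel summation gives $\sum_\sigma(1-\alpha)^{|P(\sigma)|}=\alpha\sum_{M\ge0}A_M(1-\alpha)^M$; substituting the bound on $A_M$, the tail $\sum_{M\ge n}(M/n)^{n/(k-1)}(1-\alpha)^M$ is estimated by the negative-binomial series (equivalently a $\Gamma$-integral), which with $r!\le r^r$ and $r=n/(k-1)$ yields
\[
\sum_\sigma(1-\alpha)^{|P(\sigma)|}\le \exp(O(n))\,n^{-n/(k-1)}\,\Gamma\!\bigl(\tfrac{n}{k-1}+1\bigr)\,\alpha^{-n/(k-1)}=\exp(O(n))\,\alpha^{-n/(k-1)},
\]
which is the claimed bound. (When $\alpha$ is exponentially small one uses instead the trivial $\mathbb{E}\le n!\le\exp(Cn)\alpha^{-n/(k-1)}$; when $\alpha$ is bounded away from $0$ every quantity is $\exp(O(n))$.)

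For the lower bound — and hence the sharpness — note that every $\pi\in S_k$ with $k\ge2$ is either sum-indecomposable or skew-indecomposable, since a permutation of length $\ge2$ cannot be both sum- and skew-decomposable; assume the former (otherwise replace $\oplus$ by $\ominus$ below and reverse the order of values). Put $\ell=\lfloor\alpha^{-1/(k-1)}\rfloor$ and let $\sigma$ range over all direct sums $\tau_1\oplus\cdots\oplus\tau_{n/\ell}$ with the $\tau_i\in S_\ell$ arbitrary. Sum-indecomposability of $\pi$ forces every copy of $\pi$ in such a $\sigma$ to lie inside a single block, so $|P(\sigma)|\le \tfrac{n}{\ell}\binom{\ell}{k}\le n\ell^{k-1}/k!\le n/(k!\alpha)$. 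There are $(\ell!)^{n/\ell}\ge\exp(-O(n))\,\ell^n\ge\exp(-O(n))\,\alpha^{-n/(k-1)}$ such $\sigma$, and each $\Lambda$-avoids $\pi$ with probability $(1-\alpha)^{|P(\sigma)|}\ge\exp(-O(n))$; summing over them yields $\mathbb{E}\bigl[\,\#\{\cdots\}\,\bigr]\ge\exp(-O(n))\,\alpha^{-n/(k-1)}$, valid in the range where this does not exceed $n!$, i.e.\ whenever the bound is meaningful.
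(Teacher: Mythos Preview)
Your reduction by linearity of expectation and the plan to bound $A_M=\#\{\sigma:|P(\sigma)|\le M\}$ via a Marcus--Tardos-type contraction is exactly the paper's route; the paper finishes the summation by a crude max-term bound rather than your Abel/Gamma computation, and the block-permutation lower bound is the same (the paper simply takes $\pi(1)>\pi(k)$ without loss of generality, which already confines copies of $\pi$ to single blocks of a direct sum, so the sum/skew-indecomposability argument is correct but unnecessary). Your remark about ``copies of $\pi$ straddling several blocks'' is a red herring: the contraction step only needs that each copy of $P_\pi$ in $N$ lifts to at least one copy of $\pi$ in $\sigma$, not the converse.

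The sketch of the $A_M$ bound, however, has a real gap. The contraction scale should satisfy $\ell^{2k-2}\asymp M/n$, not $\ell^{k-1}\asymp M/n$: the reconstruction of $\sigma$ from $N$ costs $\ell^{2n}$, not $\ell^n$ (for each of the $n$ rows of $P_\sigma$ one chooses a $1$-cell in the corresponding row of $N$ and then a column inside it; the product over rows is $\ell^n\prod_j a_j^{\ell}$ with $\sum_j a_j\le n$, and AM--GM only gives $\prod_j a_j^{\ell}\le\ell^n$). With your $\ell$ this produces $(M/n)^{2n/(k-1)}$, the square of the target. More importantly, the Klazar/Marcus--Tardos single-exponential bound you invoke counts $P_\pi$-\emph{avoiding} matrices, but $N$ does not avoid $P_\pi$---it merely has at most $M$ copies. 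The paper handles this by first establishing a $0$--$1$ matrix supersaturation (an $n\times n$ matrix with $a$ ones contains $\gtrsim a^{2k-1}/n^{2k-2}$ copies of $A_\pi$, proved by random $r\times r$ sampling on top of the F\"uredi--Hajnal bound), and then bootstrapping this through recursive $2$-contraction into an $\exp\bigl(O(n'+(mn'^{\,2k-2})^{1/(2k-1)})\bigr)$ count of $n'\times n'$ matrices with at most $m$ copies. It is this refined count, together with the correct choice of $\ell$, that makes the number of admissible $N$ equal to $\exp(O(n))$ and yields the claimed $(M/n)^{n/(k-1)}$.
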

\begin{remark}
When $\alpha=n/\binom{n}{k}$, note that this theorem gives a lower bound of $\exp(Cn)\cdot n!$. Thus $n!$ is correct to an exponential factor when $\alpha<n/\binom{n}{k}$ (the answer can only increase when $\alpha$ decreases). Thus we have successfully managed bound the expectation within an exponential for all values of $\alpha$.
\end{remark}
As we will see in Section \ref{cordeduction}, due to linearity of expectation, Theorem \ref{randomcasecor} reduces to bounding the number of permutations containing few copies of $\pi$, for which we will require bounds on the maximal number of ones in a $0\mhyphen 1$ matrices containing few copies of the permutation matrix $A_{\pi}$. Both of these bounds may be of independent interest as they give sharp approximations up to respectively an exponential and a constant.

\begin{theorem}\label{01matrices}
Let $k\in\mathbb{Z}^+$, $\pi\in S_k$, and let $A_{\pi}$ be the $k\times k$ permutation matrix corresponding to $\pi$. There exist constants $C=C(\pi)$ and $C'=C'(\pi)>0$ such that if $M$ is a $0\mhyphen 1$ matrix of size $n\times n$ containing $a$ ones, with $Cn\leq a\leq n^2$, then $M$ contains at least $C'\frac{a^{2k-1}}{n^{2k-2}}$ copies of $A_{\pi}$. Furthermore, for $a$ in the given range, this bound is sharp to within a constant factor (depending on $\pi$), in the sense that for any $a$ one can always find an $M$ that attains this lower bound to within a constant factor.
\end{theorem}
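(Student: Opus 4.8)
The essential input is the Marcus--Tardos theorem \cite{MT} proving the Füredi--Hajnal conjecture: for $\pi\in S_k$ there is a constant $c_\pi$ such that every $m\times m$ $0\mhyphen 1$ matrix with more than $c_\pi m$ ones contains a copy of $A_\pi$. Throughout I assume $k\ge 2$ (for $k=1$ a copy of $A_\pi=[1]$ is a single $1$-entry, so $M$ has exactly $a$ copies, with an all-ones row showing sharpness), and I take $C=C(\pi)$ to be a sufficiently large multiple of $c_\pi$. The first ingredient is a supersaturation-by-removal lemma: if an $m\times m$ matrix $R$ has $x>c_\pi m$ ones, then $R$ contains at least $x-\lceil c_\pi m\rceil$ copies of $A_\pi$, no two coinciding as $k$-element sets of cells. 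The proof is greedy deletion: while the current number of ones is at least $\lceil c_\pi m\rceil+1$, locate a copy $K_i$ by Marcus--Tardos, record it, and zero out one of its $1$-entries $e_i$; this runs for $x-\lceil c_\pi m\rceil$ rounds. Each recorded $K_i$ is a genuine copy in $R$ (deletions only turn ones into zeros), and the $K_i$ are pairwise distinct because $e_i\in K_i$ but, for $j>i$, $e_i$ was already zeroed before $K_j$ was found.

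The core of the proof is an averaging over random submatrices. Fix a parameter $m$ and form $R=M[S,T]$ with $S$ a uniformly random $m$-subset of the rows and $T$ an independent uniformly random $m$-subset of the columns. Then $\mathbb{E}[\#\,\mathrm{ones}(R)]=a(m/n)^2=:\mu$ exactly, since each $1$-cell is retained with probability $(m/n)^2$; and pointwise $\#\{A_\pi\subseteq R\}\ge\max\!\big(0,\ \#\,\mathrm{ones}(R)-\lceil c_\pi m\rceil\big)$ by the lemma, so, using $\max(0,Y-b)\ge Y-b$, $\mathbb{E}[\#\{A_\pi\subseteq R\}]\ge\mu-\lceil c_\pi m\rceil$. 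On the other hand a fixed copy of $A_\pi$ in $M$ occupies $k$ distinct rows and $k$ distinct columns, all of which must be selected for it to appear in $R$, so by linearity $\mathbb{E}[\#\{A_\pi\subseteq R\}]=\#\{A_\pi\subseteq M\}\cdot\p{\tfrac{(m)_k}{(n)_k}}^2$, with $(x)_k=x(x-1)\cdots(x-k+1)$. Hence
\[
\#\{A_\pi\subseteq M\}\ \ge\ \big(\mu-\lceil c_\pi m\rceil\big)\,\p{\tfrac{(n)_k}{(m)_k}}^2 .
\]
Now take $m=\lceil 4c_\pi n^2/a\rceil$, which satisfies $k\le m<n$ when $Cn\le a\le\delta n^2$ for a suitably small $\delta=\delta(\pi)$ and $C$ large. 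Then $\mu=a(m/n)^2\ge 4c_\pi m$, so $\mu-\lceil c_\pi m\rceil\ge c_\pi m$, and $(n)_k/(m)_k\ge\big((n-k)/m\big)^k\ge(n/2m)^k$ once $n\ge 2k$, so the display yields $\#\{A_\pi\subseteq M\}\ge 2^{-2k}c_\pi\,n^{2k}/m^{2k-1}$; as $m=O(n^2/a)$ (constant depending on $\pi$), this is $\ge C'\,a^{2k-1}/n^{2k-2}$ for an explicit $C'=C'(\pi)>0$. In the dense regime $a>\delta n^2$ one instead fixes $m$ to a large constant $m_0(\pi)$, for which $\mu=a(m_0/n)^2\ge\delta m_0^2>(c_\pi+\tfrac12)m_0$, and the same display gives $\#\{A_\pi\subseteq M\}\gtrsim(n/m_0)^{2k}\gtrsim n^{2k}\ge a^{2k-1}/n^{2k-2}$; the finitely many $n<2k$ are absorbed into constants. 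This is all routine once $C(\pi)$ is chosen large enough.

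For sharpness when $n\le a\le n^2$, let $d=\lceil a/n\rceil$ and let $E$ be an $A_\pi$-free $0\mhyphen 1$ matrix on $\lfloor n/d\rfloor$ points with $\Theta(n/d)$ ones, chosen to be one of the classical bounded-degree constructions witnessing the linearity of the Füredi--Hajnal function (each row and column of $E$ has $O(1)$ ones). Let $M$ be the $d$-fold blow-up of $E$ (replace each $1$ by a $d\times d$ all-ones block and each $0$ by a $d\times d$ zero block), padded with zero rows and columns to size $n\times n$; then $M$ has $\Theta((n/d)d^2)=\Theta(a)$ ones. A copy of $A_\pi$ contained in a single all-ones block is one of $\binom{d}{k}^2=\Theta(d^{2k})$, and there are $\Theta(n/d)$ such blocks, giving $\Theta((n/d)d^{2k})=\Theta(a^{2k-1}/n^{2k-2})$ copies. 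Every other copy projects, under the map collapsing each block, to a configuration of at most $k$ ones of $E$; since $E$ avoids $A_\pi$, this projected configuration must have two of its points sharing a row or column of $E$, and the bounded degree of $E$ caps the number of such configurations at $O((n/d)^{k-1})$ while they can occupy at most $O(d^{2k})$ in-block positions, so these straddling copies also number $O(a^{2k-1}/n^{2k-2})$. Hence $\#\{A_\pi\subseteq M\}=\Theta(a^{2k-1}/n^{2k-2})$.

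The delicate points are the cross-block count in the sharpness construction --- pinning down which extremal $A_\pi$-free matrix to blow up and bounding copies that straddle blocks, uniformly over $\pi$ --- and the constant-chasing making the random-submatrix bound valid across the whole range $Cn\le a\le n^2$, including capping $m$ at the dense end. The lower bound itself is short: Marcus--Tardos, the greedy-removal lemma, and one use of linearity of expectation.
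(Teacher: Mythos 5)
Your lower-bound argument --- a greedy-deletion removal lemma from Marcus--Tardos followed by averaging over a random $m\times m$ submatrix with $m\sim n^2/a$ --- is essentially the paper's proof. The paper phrases it in densities $1(M)=a/n^2$ and $\pi(M)$ and chooses $r=\lfloor 3c_\pi/1(M)\rfloor$, so $r$ automatically stays bounded when $a=\Theta(n^2)$ and no sparse/dense case split is needed, but that is cosmetic; this half of your proposal is sound.

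The sharpness construction is where you diverge from the paper, and there is a genuine gap. The paper assumes $\pi(1)>\pi(k)$ WLOG and fills the $n^2/a$ main-diagonal blocks of side $a/n$ with ones; the sign condition on $\pi$ then forces every copy of $A_\pi$ to lie entirely inside a single diagonal block (the first entry lies above and to the right of the last, while consecutive diagonal blocks move down \emph{and} to the right), so there are \emph{no} cross-block copies at all and the count is $O\bigl((n^2/a)\cdot(a/n)^{2k}\bigr)=O(a^{2k-1}/n^{2k-2})$. Your blow-up of a general bounded-degree $A_\pi$-free $E$ does admit cross-block copies, and your bound on them is wrong. You estimate them as $O\bigl((n/d)^{k-1}\bigr)\cdot O(d^{2k})$ and assert this is $O(a^{2k-1}/n^{2k-2})$; but with $a=\Theta(nd)$ the product is $\Theta(n^{k-1}d^{k+1})$ while $a^{2k-1}/n^{2k-2}=\Theta(nd^{2k-1})$, a ratio of $(n/d)^{k-2}$, unbounded for $k\ge 3$ once $d\ll n$. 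Concretely, with $k=3$, $\pi=132$, $E$ the identity, and $d=O(1)$ (so $a=\Theta(n)$), the copies with two entries in one block and the third in an earlier diagonal block already number $\Theta(n^2d^4)$, a factor $\Theta(n/d)$ more than the target $\Theta(nd^5)$. The repair is precisely the paper's normalization: reverse columns so that $\pi(1)>\pi(k)$ and take $E$ diagonal, after which straddling copies cannot exist.
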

\begin{remark}
This theorem can be thought of as a `supersaturation' version of F\"uredi-Hajnal. Indeed, while F\"uredi-Hajnal states that with $>Cn$ ones at least one copy of $A_{\pi}$ is forced, Theorem \ref{01matrices} gives a bound on the number of copies of $A_{\pi}$ that are forced by any number of ones. We will see in Section \ref{01boundsection} how to prove Theorem \ref{01matrices} by bootstrapping F\"uredi-Hajnal.
\end{remark}
In a similar but more complicated way to the deduction of Stanley-Wilf from F\"uredi-Hajnal, we will be able to show the following.
\begin{theorem}\label{permsupersaturation}
Let $k\in\mathbb{Z}^+$, $k>1$ and $\pi\in S_k$. There exists some constant $C=C(\pi)$ and $c=c(\pi)$, $C>c\in\mathbb{R}$ such that for all $m,n\in\mathbb{Z}^{\geq 0}$ with $m\leq\binom{n}{k}$, letting $S_n(m,\pi)$ be the number of permutations in $S_n$ containing at most $m$ copies of $\pi$, we have that
\[\exp(cn)\cdot\max\left(1,\left(\frac{m}{n}\right)^{\frac{n}{k-1}}\right)\leq S_n(m,\pi)\leq \exp(Cn)\cdot\max\left(1,\left(\frac{m}{n}\right)^{\frac{n}{k-1}}\right).\]
\end{theorem}
\begin{remark}
Notice that $c$ is potentially negative, and thus the lower bound in Theorem \ref{permsupersaturation} is only nontrivial if $m>n$. Further note that the theorem simply reduces to Stanley-Wilf when $m=0$.
\end{remark}

In Section \ref{cordeduction}, we will make the easy deduction of Theorem \ref{randomcasecor} as a corollary of Theorem \ref{permsupersaturation}. In Section \ref{01boundsection}, we will prove Theorem \ref{01matrices} by bootstrapping F\"uredi-Hajnal. Finally, in Section \ref{randomproof} we will prove Theorem \ref{permsupersaturation}.\\

\begin{remark}
It is natural to ask whether in addition to the expectation result in Theorem \ref{randomcasecor}, one can also derive a concentration result on the number of $\sigma$ that $\Lambda$-avoid $\pi$. However, this at least does not seem to us to be obvious. For example, if one tries computing the variance of the random variable (in order to for example apply Chebyshev's inequality), one immediately runs into difficulties, which we will now describe.

Notice that the expected number of $\sigma$ that $\Lambda$-avoid $\pi$ is a sum of indicator random variables, one for each $\sigma\in S_n$, which are $1$ if and only if that $\sigma$ $\Lambda$-avoids $\pi$. In a variance computation, we would need to compute the covariance of these indicators for $\sigma$ and $\sigma'$. This computation involves simultaneously keeping track of the number of copies of $\pi$ in $\sigma$ and $\sigma'$, along with the number of index sets (of size $k$) at which $\sigma$ and $\sigma'$ simultaneously contain a copy of $\pi$. The interaction of these three quantities seems difficult to deal with. 

Of course, Markov's inequality combined with Theorem \ref{randomcasecor} shows that an upper bound of the type given in Theorem \ref{randomcasecor} holds with probability $1-e^{-n}$. However, since this bound potentially involves changing the constant $C$, it is quite weak, and given the difficulties above it seems like proving stronger bounds will likely require new ideas.
\end{remark}
\vspace{12pt}

We will also consider the case when $\Lambda$ is a fixed graph with particular structure. In particular, we will show the following.
\begin{theorem}\label{fixedtheorem}
For every permutation $\pi$ and any $\epsilon > 0$, the number of $n$-permutations $\Lambda$-avoiding $\pi$ is $O\p{\frac{n\log^{2+\epsilon}n}{L}}^n$ as long as $\Lambda$ is $k$-uniform and satisfies the following:\\

$\Lambda$ contains a collection of $L$-vertex cliques where each of the $n$ vertices belongs to at least $\delta(\Lambda) \geq 1$ cliques in the collection and at most $\Delta(\Lambda) = O(1)$.
\end{theorem}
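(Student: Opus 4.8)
The plan is to realize $\Lambda$-avoiding permutations as independent sets of a hypergraph and to count them with the hypergraph container method. First pass to $0\mhyphen 1$ matrices: a permutation $\sigma\in S_n$ $\Lambda$-avoids $\pi$ iff its permutation matrix $P_\sigma$, an $n\times n$ $0\mhyphen 1$ matrix with exactly $n$ ones, contains no copy of $A_\pi$ whose $k$ columns form an edge of $\Lambda$, so it suffices to bound the number of $\Lambda$-avoiding $0\mhyphen 1$ matrices with $n$ ones. On the vertex set $[n]\times[n]$ of cells define a $k$-uniform hypergraph $\mathcal{H}$ with one hyperedge $\{(c_i,r_{\pi(i)}):i\in[k]\}$ for every edge $\{c_1<\cdots<c_k\}$ of $\Lambda$ and every choice of rows $r_1<\cdots<r_k$; then the independent sets of $\mathcal{H}$ are exactly the $\Lambda$-avoiding $0\mhyphen 1$ matrices, and I want to count those of size $n$.

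The crux is a balanced supersaturation estimate for $\mathcal{H}$: for every set $W$ of cells with $|W|$ above a threshold of order $n^2/L$ (up to polylog factors), $\mathcal{H}[W]$ should contain a subhypergraph $\mathcal{G}$ whose edge count is large and whose $\ell$-degrees, for all $1\le\ell\le k$, are bounded as the container theorem requires. The edge count comes from Theorem \ref{01matrices}: restricting $W$ to the columns of a clique $K$ in the guaranteed collection gives an $L\times n$ $0\mhyphen 1$ submatrix which, viewed inside the $n\times n$ grid and provided it has at least $Cn$ ones, contains at least $C'a_K^{2k-1}/n^{2k-2}$ copies of $A_\pi$ (here $a_K$ is the number of ones of $W$ in those columns), and each such copy is a hyperedge of $\mathcal{H}$ since every $k$-subset of a clique is an edge of $\Lambda$. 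Summing over the collection, using convexity of $x\mapsto x^{2k-1}$, that the collection has $\Theta(n/L)$ cliques, and that each column lies in at least $\delta(\Lambda)=\Omega(1)$ of them — so $\sum_K a_K\ge\delta(\Lambda)|W|$, and after discarding the at most $\Theta(n/L)$ cliques below the $Cn$ threshold a positive fraction of this mass survives once $|W|$ exceeds the threshold — yields a total of the required order. Making this \emph{balanced} is where $\Delta(\Lambda)=O(1)$ enters decisively: since each column lies in only $O(1)$ cliques, the copies of $A_\pi$ contributed by different cliques cannot accumulate on any small set of cells, which lets one pass to a well-distributed subhypergraph. I expect the clean route is to first prove a balanced version of Theorem \ref{01matrices} — a $0\mhyphen 1$ matrix with $a$ ones contains $\gtrsim a^{2k-1}/n^{2k-2}$ copies of $A_\pi$ that are themselves spread out, with small $\ell$-degrees relative to their number, obtained either from the proof of Theorem \ref{01matrices} or by a regularization step — and then to lift it to $\Lambda$ clique by clique. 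This is the main obstacle of the argument.

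Given balanced supersaturation, I would invoke the hypergraph container theorem of \cite{BMS} and \cite{ST}, iterated in the standard way, to obtain a family $\mathcal{C}$ of containers such that every $\Lambda$-avoiding $0\mhyphen 1$ matrix lies in some $C\in\mathcal{C}$, each $|C|=O\p{\frac{n^2\log^{2+\epsilon}n}{L}}$, and $|\mathcal{C}|\le\exp(O(n))$ — one logarithmic factor arising from the $\log(1/\tau)$ in the container lemma, another from the number of iterations needed to bring the container size down to scale $n^2/L$, with the extra $\epsilon$ absorbing constants and lower-order terms. The number of permutation matrices inside a fixed $C$ is at most $\binom{|C|}{n}\le(e|C|/n)^n$, so the number of $\Lambda$-avoiding permutations is at most
\[
|\mathcal{C}|\cdot\binom{|C|}{n}\ \le\ \exp(O(n))\,\p{\frac{O(n\log^{2+\epsilon}n)}{L}}^{n}\ =\ O\p{\frac{n\log^{2+\epsilon}n}{L}}^{n},
\]
the exponential prefactor being absorbed into the base, and likewise any mismatch between $\log^2$ and $\log^{2+\epsilon}$ since $\log^\epsilon n\to\infty$. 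Besides the balanced supersaturation, the points needing care are the bookkeeping of the container parameters so the sizes really come out as $n^2/L$ up to only polylogarithmic factors while $|\mathcal{C}|$ stays small, and the observation that passing to $0\mhyphen 1$ matrices and using the crude count $\binom{|C|}{n}$ inside each container loses essentially nothing in the regime where $L$ is bounded away from $n$ — precisely the regime in which the stated bound is nontrivial, since for $L$ very close to $n$ it would undercut the $\exp(\Omega(n))$ permutations that are globally $\pi$-avoiding.
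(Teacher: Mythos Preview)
Your outline matches the paper's framework: hypergraph formulation on the $n\times n$ grid, clique-by-clique supersaturation, iterated containers down to size $O(n^2\log^{2+\epsilon}n/L)$ with $\exp(O(n))$ containers, then $\binom{|C|}{n}$ inside each. The divergence is in the balanced supersaturation step, which you correctly flag as the main obstacle but leave unresolved. You propose to extract the raw edge count from Theorem~\ref{01matrices} on each clique's columns and then balance afterwards (via an unspecified ``balanced version'' of that theorem or a regularization step). The paper never invokes Theorem~\ref{01matrices} here; it uses only F\"uredi--Hajnal (the linear bound $c_\pi n$) and builds the balanced subhypergraph directly by the Morris--Saxton safe-edge technique. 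Within each ``rich'' block one adds edges one at a time, refusing any edge that contains an $\ell$-subset whose current degree is already near the target $\Delta_\ell$-cap, and proves that enough safe edges always exist via a random-submatrix argument: select columns with probability $q$ and rows with probability $Lq/n$, delete one vertex from every surviving dangerous set, and apply F\"uredi--Hajnal to the remaining subgrid. The hypothesis $\Delta(\Lambda)=O(1)$ is used where you suspect --- to union the per-block subhypergraphs into a single $G'$ without inflating $\ell$-degrees --- while $\delta(\Lambda)=\Omega(1)$ is what guarantees the union carries enough edges after the non-rich blocks are discarded. Your route via a balanced Theorem~\ref{01matrices} is plausible but is a different lemma that the paper neither states nor proves; the Morris--Saxton iterative construction is exactly the piece your proposal is missing.

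A minor point: your closing worry about the regime $L\approx n$ is unfounded. For $L=n$ the bound reads $O(\log^{2+\epsilon}n)^n=\exp\bigl((2+\epsilon)n\log\log n\bigr)$, which still dominates any $c_\pi^n$, so nothing is undercut.
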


In Sections \ref{fixedcase} to \ref{SRB}, we will prove Theorem \ref{fixedtheorem}.  The main tool in our analysis will be the hypergraph containers method.  The containers method enables us to distribute the vertices of a hypergraph into containers such that every independent set in the hypergraph belongs to one of the containers.  We can apply this method recursively, breaking each container down further into more containers in a branching fashion, to bound the total number of independent sets in a hypergraph.\\

We will set up a hypergraph whose vertices represent the 1-entries in a matrix and whose edges represent the entries in a submatrix containing $P_\pi$ with columns $\in E(\Lambda)$.  In this context, independent sets correspond to $\Lambda$-avoiding matrices.  Using the hypergraph containers method, we bound the number of permutation-matrix independent sets, utilizing F\"uredi-Hajnal to show that the conditions needed to apply the method hold.\\

In Section \ref{fixedcase} we introduce this fixed $\Lambda$ case.  We motivate the constraint that $\Lambda$ contains cliques of size $\text{poly}(n)$ by demonstrating that $\Lambda$ with maximal clique of constant size can contain $\Theta(n^k)$ edges and still be avoided by almost all $n$-permutations.  In Section \ref{formulation}, we establish the matrix/hypergraph formulation of the problem.  In Section \ref{HC}, we formally introduce the hypergraph containers lemma and investigate the necessary conditions to apply the lemma in a recursive branching fashion.  In Sections \ref{RL} and \ref{SRB}, we verify that these conditions are met using two additional lemmas, completing the proof of Theorem \ref{fixedtheorem}.\\

Many of the arguments in these sections parallel those presented in a paper \cite{FMS} by Asaf Ferber, Gweneth Anne McKinley, and Wojciech Samotij.  Additionally, the application of the hypergraph container lemma in a recursive branching fashion is adopted from a paper \cite{MS} by Morris and Saxton.\\

Lastly, in Section \ref{conclusion}, we will compare Theorems \ref{randomcasecor} and \ref{fixedtheorem} and summarize our results.

\section{Linearity of Expectation}\label{cordeduction}

Fix an integer $k>1$ and suppose $\Lambda$ is a random $k$-uniform hypergraph on $[n]$ with each edge chosen independently at random with edge probability $\alpha\in (0,1]$. In this case, we may simplify the problem by making use of linearity of expectation. In particular, let us define
\[Av_{n,\Lambda}(\pi):=\{\sigma\in S_n:\sigma\text{ }\Lambda\text{-avoids }\pi\}.\]
Then by linearity of expectation, we have that
\[\mathbb{E}_{\Lambda}[|Av_{n,\Lambda}(\pi)|]=\displaystyle\sum_{\sigma\in S_n}\Pr[\sigma\text{ }\Lambda\text{-avoids }\pi].\]
This latter probability is simply the probability that none of the copies of $\pi$ in $\sigma$ correspond to edges of $\Lambda$, which is $(1-\alpha)^{\#\text{ of copies of }\pi\text{ in }\sigma}$. Therefore,
\begin{equation}\label{linexp}
\mathbb{E}_{\Lambda}[|Av_{n,\Lambda}(\pi)|]=\displaystyle\sum_{\sigma\in S_n}(1-\alpha)^{\#\text{ of copies of }\pi\text{ in }\sigma}.
\end{equation}
Thus bounds on the number of permutations containing few copies of $\pi$, as given in Theorem \ref{permsupersaturation}, will give us bounds on our desired quantity $\mathbb{E}_{\Lambda}[|Av_{n,\Lambda}(\pi)|]$. We now make this argument rigorous.

\begin{proof}[Deduction of Theorem \ref{randomcasecor} from Theorem \ref{permsupersaturation}]
We first prove the upper bound. By (\ref{linexp}),
\begin{align*}
\mathbb{E}_{\Lambda}[|Av_{n,\Lambda}(\pi)|] & =\displaystyle\sum_{\sigma\in S_n}(1-\alpha)^{\#\text{ of copies of }\pi\text{ in }\sigma} \\ & \leq\displaystyle\sum_{m=0}^{\binom{n}{k}}(1-\alpha)^m\cdot |\{\sigma\in S_n:\sigma\text{ contains at most }m\text{ copies of }\pi\}|.
\end{align*}
By Theorem \ref{permsupersaturation}, there exists $C=C(\pi)$ such that this is at most
\begin{align*}
\displaystyle\sum_{m=0}^{\binom{n}{k}}(1-\alpha)^m\exp(Cn)\cdot & \max\left(1,\left(\frac{m}{n}\right)^{\frac{n}{k-1}}\right) =\displaystyle\sum_{m=0}^n(1-\alpha)^m\exp(Cn) \\ & +\displaystyle\sum_{m=n+1}^{\binom{n}{k}}(1-\alpha)^m\exp(Cn)\left(\frac{m}{n}\right)^{\frac{n}{k-1}} \\ & \leq (n+1)\exp(Cn)+\frac{\exp(Cn)}{n^{\frac{n}{k-1}}}\displaystyle\sum_{m=n+1}^{\binom{n}{k}} (1-\alpha)^m m^{\frac{n}{k-1}} \\ & \leq (n+1)\exp(Cn)+\binom{n}{k}\frac{\exp(Cn)}{n^{\frac{n}{k-1}}}\cdot\displaystyle\max_{m\in\mathbb{R}^+}(1-\alpha)^m m^{\frac{n}{k-1}} \\ & \leq \exp((C+k)n)\left(1+n^{-\frac{n}{k-1}}\cdot\displaystyle\max_{m\in\mathbb{R}^+}(1-\alpha)^m m^{\frac{n}{k-1}}\right) \\ & \leq\exp((C+k)n)\left(1+n^{-\frac{n}{k-1}}\cdot\displaystyle\max_{m\in\mathbb{R}^+}e^{-\alpha m} m^{\frac{n}{k-1}}\right),
\end{align*}
where we are simply bounding our sum by its number of terms times its maximum term, and using the trivial bounds $\binom{n}{k}\leq n^k<(e^n)^k=e^{kn}$, $n+1\leq kn<e^{kn}$ for $n\geq 1$ and $k\geq 2$, and $1-\alpha\leq e^{-\alpha}$. Now, by taking the logarithm and differentiating with respect to $m$, we see that $e^{-\alpha m} m^{\frac{n}{k-1}}$ is maximized when $-\alpha+\frac{n}{(k-1)m}=0$, or rearranging, $m=\frac{n}{(k-1)\alpha}$. Substituting, we have that
\[\displaystyle\max_{m\in\mathbb{R}^+}e^{-\alpha m} m^{\frac{n}{k-1}}=\left(\frac{n}{e(k-1)\alpha}\right)^{\frac{n}{k-1}}.\]
Putting this into the calculation from earlier,
\begin{align*}
\mathbb{E}_{\Lambda}[|Av_{n,\Lambda}(\pi)|] & \leq\exp((C+k)n)\left(1+n^{-\frac{n}{k-1}}\left(\frac{n}{e(k-1)\alpha}\right)^{\frac{n}{k-1}}\right) \\ & =\exp((C+k)n)\left(1+\left(\frac{1}{e(k-1)\alpha}\right)^{\frac{n}{k-1}}\right) \\ & \leq\exp((C+k)n)\left(1+\alpha^{-\frac{n}{k-1}}\right) \\ & \leq\exp((C+k+1)n)\alpha^{-\frac{n}{k-1}},
\end{align*}
the last step being because $\alpha\leq 1$. Replacing $C+k+1$ by $C$, we have deduced the upper bound.

For the lower bound, suppose now $\alpha\geq n/\binom{n}{k}$. Let $m=\left\lceil\frac{n}{\alpha}\right\rceil$ in the lower bound of Theorem \ref{permsupersaturation}. Since $\alpha\geq n/\binom{n}{k}$, we have $m\leq\binom{n}{k}$, so this is valid. We obtain that there are at least $\exp(C'n)\alpha^{-\frac{n}{k-1}}$ permutations in $S_n$ containing at most $\left\lceil\frac{n}{\alpha}\right\rceil$ copies of $\pi$ for some $C'=C'(\pi)$. Thus, by (\ref{linexp}),
\begin{align*}
\mathbb{E}_{\Lambda}[|Av_{n,\Lambda}(\pi)|] & \geq\exp(C'n)\alpha^{-\frac{n}{k-1}}(1-\alpha)^{\left\lceil\frac{n}{\alpha}\right\rceil} \\ & \geq\exp(C'n)\alpha^{-\frac{n}{k-1}}\exp\left(-\frac{\alpha}{1-\alpha}\cdot\frac{2n}{\alpha}\right) \\ & \geq\exp\left(\left(C'-\frac{2}{1-\alpha}\right)n\right)\alpha^{-\frac{n}{k-1}},
\end{align*}
where in the second line we used the inequality $\log(1-\alpha)\geq-\frac{\alpha}{1-\alpha}$ (an easy consequence of Taylor expansion) and $\left\lceil\frac{n}{\alpha}\right\rceil\leq\frac{2n}{\alpha}$ (immediate as $\alpha\leq 1$ and $n\geq 1$). This proves the lower bound with a constant of $C=C'-4$ when $\alpha\leq\frac{1}{2}$.

With $\alpha\geq\frac{1}{2}$, note that $\mathbb{E}_{\Lambda}[|Av_{n,\Lambda}(\pi)|]\geq 1\geq\exp(-n)\alpha^{-\frac{n}{k-1}}$ (as either the all-increasing or all-decreasing permutation avoids $\pi$ over any hypergraph), so the constant $-1$ suffices. So letting $C=\min(C'-4,-1)$ is sufficient to prove the lower bound, completing our argument.
\end{proof}

\section{Bounds on $0\mhyphen 1$ Matrices}\label{01boundsection}
As in the proof strategy of \cite{MT}, before we prove our result for permutations we first pass to the domain of $0\mhyphen 1$ matrices. Since we would like to bound the number of permutations with few copies of $\pi$, we first show that a matrix $M$ that contains few copies of the corresponding permutation matrix $A_{\pi}$ must have few ones.

The technique we use to prove Theorem \ref{01matrices} is a classic method for proving supersaturation results; that is, we show that a random submatrix of $M$ will with non-negligible probability contain at least one copy of $A_{\pi}$, so all of $M$ must contain several copies of $A_{\pi}$.

In \cite{MT}, Marcus and Tardos famously proved the following result, previously known as the F\"uredi-Hajnal conjecture.

\begin{theorem}[Marcus-Tardos]\label{FurHaj}
There exists a constant $c_{\pi}$ such that for all $n$, any $0\mhyphen 1$ matrix of size $n\times n$ containing at least $c_{\pi}n$ ones contains a copy of $A_{\pi}$.
\end{theorem}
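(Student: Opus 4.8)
This is precisely the Marcus--Tardos theorem, so the plan is to reproduce their divide-and-conquer argument. Let $f(n)$ be the maximum number of ones in an $n\times n$ $0\mhyphen 1$ matrix avoiding $A_{\pi}$; the claim is equivalent to $f(n)\le c_{\pi}n$ for a constant depending only on $k$, which I would prove by strong induction on $n$ via a block decomposition. Suppose $M$ is $n\times n$ and avoids $A_{\pi}$. Partition $M$ (padding with zero rows/columns if needed) into an $m\times m$ array of blocks of size $k^{2}\times k^{2}$, where $m=\lceil n/k^{2}\rceil$. Call a block \emph{wide} if its ones lie in at least $k$ distinct columns of the block, \emph{tall} if they lie in at least $k$ distinct rows, and \emph{small} otherwise; a small block confines its ones to at most $k-1$ rows and $k-1$ columns, hence has at most $(k-1)^{2}$ ones.

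The crucial step is a pigeonhole bound on wide blocks: any single block-column of $M$ contains at most $(k-1)\binom{k^{2}}{k}$ wide blocks. Indeed, to each wide block in a fixed block-column assign a set of exactly $k$ of its occupied within-block columns, an element of $\binom{[k^{2}]}{k}$; if there were more than $(k-1)\binom{k^{2}}{k}$ wide blocks, at least $k$ of them would be assigned the same $k$-set $T=\{t_{1}<\dots<t_{k}\}$. List $k$ such blocks top to bottom as $B_{1},\dots,B_{k}$. They share the same $k^{2}$ columns of $M$; let $y_{j}$ be the actual column of $M$ corresponding to within-block column $t_{j}$, so $y_{1}<\dots<y_{k}$. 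In $B_{i}$ the within-block column $t_{\pi(i)}$ contains a one, say in row $x_{i}$ of $M$; since the $B_{i}$ are stacked top to bottom, $x_{1}<\dots<x_{k}$, and $M_{x_{i},y_{\pi(i)}}=1$ for every $i$, giving a copy of $A_{\pi}$ --- contradiction. Summing over block-columns, $M$ has at most $(k-1)\binom{k^{2}}{k}m$ wide blocks, and symmetrically at most $(k-1)\binom{k^{2}}{k}m$ tall blocks.

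Next I would set up the recursion. Let $M'$ be the $m\times m$ contracted matrix with a one in position $(I,J)$ exactly when block $(I,J)$ of $M$ is nonempty. A copy of $A_{\pi}$ in $M'$ yields a copy in $M$: pick one one-entry from each block named by the copy and read off its row and column, which increase with the block-row and block-column indices respectively. Hence $M'$ avoids $A_{\pi}$, so $M$ has at most $f(m)$ nonempty blocks. Now count: the wide blocks contribute at most $k^{4}\cdot(k-1)\binom{k^{2}}{k}m$ ones, the tall blocks likewise, and the at most $f(m)$ small nonempty blocks at most $(k-1)^{2}f(m)$, so
\[ f(n)\le 2k^{4}(k-1)\binom{k^{2}}{k}\,m+(k-1)^{2}f(m),\qquad m=\lceil n/k^{2}\rceil. \]
Because the block side length $k^{2}$ exceeds $(k-1)^{2}$, the ratio $(k-1)^{2}/k^{2}$ is below $1$, so unrolling this recurrence (equivalently, the Master theorem with $a=(k-1)^{2}$, $b=k^{2}$, $d=1$ and $\log_{b}a<1$) gives $f(n)=O(n)$, with an explicit $c_{\pi}$ of order $k^{4}\binom{k^{2}}{k}$ once one absorbs the ceilings and handles $n<k^{4}$ by the trivial bound $f(n)\le n^{2}$.

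The main obstacle --- and the one genuinely clever point --- is the wide-block pigeonhole: the reason to restrict attention to wide blocks inside a single vertical strip is that they occupy the \emph{same} columns of $M$, so once $k$ of them agree on which $k$ columns are occupied, the required ones can be taken with strictly increasing rows (the blocks being vertically disjoint) over a fixed increasing set of columns --- exactly a permutation-matrix copy. Choosing the block side length to exceed $(k-1)^{2}$ (here $k^{2}$) is what forces the recursion to close. The small-block estimate, the padding/ceiling bookkeeping, and the base case are all routine.
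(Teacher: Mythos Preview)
Your proposal is correct and is precisely the original Marcus--Tardos divide-and-conquer argument: the $k^2\times k^2$ block partition, the wide/tall/small trichotomy, the pigeonhole bound of $(k-1)\binom{k^2}{k}$ wide blocks per block-column, and the contraction recursion $f(n)\le 2k^4(k-1)\binom{k^2}{k}m + (k-1)^2 f(m)$ all check out, and the closure of the recursion via $(k-1)^2<k^2$ is exactly the point.

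That said, the paper does not actually prove this statement: Theorem~\ref{FurHaj} is quoted as the Marcus--Tardos theorem and cited to \cite{MT} without argument, serving only as a black box for Lemma~\ref{easybound} and the supersaturation results that follow. So there is no ``paper's own proof'' to compare against --- you have supplied the standard proof of a result the authors simply import.
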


From this, we can immediately deduce the following (extremely weak) supersaturation result, which we will bootstrap using sampling into our stronger results.

\begin{lemma}\label{easybound}
With $c_{\pi}$ as in Theorem \ref{FurHaj}, any $0\mhyphen 1$ matrix of size $n\times n$ with $m$ ones contains at least $m-c_{\pi}n$ copies of $A_{\pi}$.
\end{lemma}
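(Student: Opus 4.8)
The plan is to prove this by an iterative deletion argument: peel off one copy of $A_\pi$ at a time using the Marcus--Tardos theorem (Theorem \ref{FurHaj}), deleting a one-entry each time, and argue that the copies produced at distinct steps must be distinct. First, I would dispose of the trivial range: if $m \le c_\pi n$ the claimed quantity $m - c_\pi n$ is nonpositive, so the bound holds vacuously, and we may assume $m > c_\pi n$.

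Next, I would build a sequence of $n \times n$ $0\mhyphen 1$ matrices $M = M_0, M_1, M_2, \dots$ as follows. Given $M_i$ with strictly more than $c_\pi n$ ones, Theorem \ref{FurHaj} supplies a copy $\kappa_i$ of $A_\pi$ inside $M_i$; since $A_\pi$ is a $k \times k$ permutation matrix, this copy occupies exactly $k$ one-entries of $M_i$. I would pick any one of those entries, record its position $e_i$, and let $M_{i+1}$ be $M_i$ with the entry at $e_i$ switched to $0$, so $M_{i+1}$ has one fewer one than $M_i$ and is entrywise $\le M_i \le M$. The process stops once the current matrix has at most $c_\pi n$ ones; since we begin with $m$ ones and lose exactly one per step, it runs for at least $m - c_\pi n$ steps, yielding copies $\kappa_0, \dots, \kappa_{t-1}$ with $t \ge m - c_\pi n$. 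Each $\kappa_i$ lies inside $M_i \le M$, hence is a genuine copy of $A_\pi$ in $M$.

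The remaining point — and the one place where a little care is needed — is to see that $\kappa_0, \dots, \kappa_{t-1}$ are pairwise distinct, so that they really contribute at least $t$ copies. I would identify a copy of $A_\pi$ with the set of $k$ cells it occupies (for a fixed pattern $\pi$, the set of occupied cells determines, and is determined by, the choice of row and column indices, so this identification is harmless). Then for $i < j$: the position $e_i$ belongs to $\kappa_i$ by construction, but $e_i$ was zeroed out when passing from $M_i$ to $M_{i+1}$, so the entry at $e_i$ is absent from $M_j$ and hence from $\kappa_j \subseteq M_j$; therefore $\kappa_i \ne \kappa_j$. This gives at least $t \ge m - c_\pi n$ distinct copies of $A_\pi$ in $M$, which is the claim. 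The main (mild) obstacle is precisely this distinctness bookkeeping: one must track which deleted positions are missing from which matrices down the chain; every other ingredient is immediate from Marcus--Tardos together with the monotonicity fact that deleting ones only removes copies.
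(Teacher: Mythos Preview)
Your proof is correct and follows essentially the same deletion idea as the paper's. The paper phrases it a bit more tersely as a one-shot contradiction---assume fewer than $m-c_\pi n$ copies, zero out one $1$-entry from each, and note that the resulting matrix still has more than $c_\pi n$ ones yet contains no copy of $A_\pi$---but your iterative version with the distinctness bookkeeping is an equivalent packaging of the same argument.
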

\begin{proof}[Proof of Lemma \ref{easybound}]
We proceed by induction on $m$. For $m\geq c_{\pi}n$ the result is trivial.

Now take $m>c_{\pi} n$ and assume Lemma \ref{easybound} for $m-1$. Take a $0\mhyphen 1$ matrix $M$ of size $n\times n$ with $m$ ones. Now, by Theorem \ref{FurHaj}, it contains a copy of $A_{\pi}$. Let $M'$ be the matrix given by $M$ with one of the ones in this copy of $A_{\pi}$ changed to a $0$. By the inductive hypothesis, $M'$ contains at least $m-c_{\pi}n-1$ copies of $A_{\pi}$. But in going from $M$ to $M'$ we eliminated at least one copy of $A_{\pi}$ by design, so $M$ must have at least $m-c_{\pi}n$ copies of $A_{\pi}$, finishing the induction.
\end{proof}

We are now ready to prove Theorem \ref{01matrices}.

\begin{proof}[Proof of Theorem \ref{01matrices}]
Take $k\in\mathbb{Z}^+$, $\pi\in S_k$, and let $c_{\pi}$ be as given by Theorem \ref{FurHaj} and Lemma \ref{easybound}. Let $M$ be a $0\mhyphen 1$ matrix of size $n\times n$ containing $a$ ones, $Cn\leq a\leq n^2$, with $C$ to be chosen later.

Take an $r$ by $r$ submatrix $R$ of $M$, with $r$ to be chosen later. Let the density of ones in $R$ (that is, the number of ones in $R$ divided by $r^2$) be $1(R)$. Similarly, let the density of $A_{\pi}$ in $R$ (that is, the number of copies of $A_{\pi}$ in $R$ divided by $\binom{r}{k}^2$) be $\pi(R)$. Define $1(M)$ and $\pi(M)$ similarly; in particular, $1(M)=\frac{a}{n^2}\geq\frac{C}{n}$ by assumption.

In this notation, Lemma \ref{easybound} tells us that 
\[\binom{r}{k}^2\pi(R)\geq r^21(R)-c_{\pi}r,\]
or rearranging,
\begin{equation}\label{Rbound}
1(R)\leq\frac{\binom{r}{k}^2}{r^2}\pi(R)+\frac{c_{\pi}}{r}.
\end{equation}

Now, let $R$ be a \emph{random} $r\times r$ submatrix of $M$ (we choose a random subset of size $r$ of the rows and similarly for the columns). Now, for each copy of $A_{\pi}$ in $M$ (defined by $k$ rows and $k$ columns), there is a $\frac{\binom{r}{k}^2}{\binom{n}{k}^2}$ probability that all rows and columns corresponding to this copy of $A_{\pi}$ are chosen to be in $R$. Thus the expected number of copies of $A_{\pi}$ in $R$ is $\frac{\binom{r}{k}^2}{\binom{n}{k}^2}$ times the number of copies of $A_{\pi}$ in $M$, and therefore
\[\mathbb{E}[\pi(R)]=\pi(M).\]

Similarly, each entry of $M$ has equal probability of appearing in $R$, and so
\[\mathbb{E}[1(R)]=1(M)\geq\frac{a}{n^2},\]
as by assumption $M$ has at least $a$ ones.

Now that we have $1(M)$ and $\pi(M)$ expressed in terms of $1(R)$ and $\pi(R)$, (\ref{Rbound}) applied to $R$ will give an inequality between $1(M)$ and $\pi(M)$. Explicitly,
\begin{equation}\label{1pirelation}
1(M)=\mathbb{E}[1(R)]\leq\mathbb{E}\left[\frac{\binom{r}{k}^2}{r^2}\pi(R)+\frac{c_{\pi}}{r}\right]=\frac{\binom{r}{k}^2}{r^2}\pi(M)+\frac{c_{\pi}}{r}.
\end{equation}
We now have lower bounded $\pi(M)$ (which is a scaling of the number of copies of $A_{\pi}$ in $M$) in terms of the number of ones in $M$. It only remains to optimize the value of our parameter $r$ in order to obtain the desired lower bound.

Clearly, we need $1(M)>\frac{c_{\pi}}{r}$ for (\ref{1pirelation}) to give any bound on $\pi(M)$ at all, so we choose $r=\left\lfloor\frac{3c_{\pi}}{1(M)}\right\rfloor$. We require $r\leq n$ (so that we can sample $r\times r$ submatrices), but this holds as long as $1(M)\geq\frac{3c_{\pi}}{n}$; that is, $M$ has at least $3c_{\pi}n$ ones. Thus taking $C=3c_{\pi}$ in the statement of Theorem \ref{01matrices} is sufficient to satisfy $r\leq n$. (This will be our only restriction on $C$.)

Note that since $c_{\pi}\geq 1$ and $1(M)\leq 1$, we have $r\geq\frac{2c_{\pi}}{1(M)}$. Thus $1(M)-\frac{c_{\pi}}{r}\geq\frac{1(M)}{2}$. Substituting into (\ref{1pirelation}),
\begin{align*}
\frac{1(M)}{2} & \leq 1(M)-\frac{c_{\pi}}{r} \\ & \leq\frac{\binom{r}{k}^2}{r^2}\pi(M).
\end{align*}
Now, since $n\geq r$, and the function $\frac{\binom{a}{k}}{a^k}$ is increasing for $a>k$ (and $0$ on integers less than $k$), we have that $\binom{r}{k}\leq\frac{r^k}{n^k}\binom{n}{k}$. Substituting this yields
\begin{align*}
1(M) & \leq 2\frac{\binom{r}{k}^2}{r^2}\pi(M) \\ & \leq 2\frac{r^{2k-2}}{n^{2k}}\binom{n}{k}^2\pi(M) \\ & \leq 2\frac{(3c_{\pi})^{2k-2}}{(1(M))^{2k-2}n^{2k}}\binom{n}{k}^2\pi(M).
\end{align*}
Letting $C'=C'(\pi):=\frac{1}{2(3c_{\pi})^{2k-2}}$, we have shown that
\begin{equation}\label{randomresult}
C'\cdot 1(M)^{2k-1}n^{2k}\leq\binom{n}{k}^2\pi(M).
\end{equation}
Now, $1(M)=\frac{a}{n^2}$ by definition. Furthermore, the right hand side of (\ref{randomresult}) is simply the number of copies of $A_{\pi}$ in $M$ (by the definition of $\pi(M)$. Thus we have shown that $M$ contains at least $C'\frac{a^{2k-1}}{n^{2k-2}}$ copies of $A_{\pi}$, so taking this value of $C'$ and $C=3c_{\pi}$ (as above), we have proven our upper bound.

To show that this bound is sharp, take $a$ and $n$ with $Cn\leq a\leq n^2$. We may modify $a$ and $n$ by at most a constant factor so that $n|a$ and $a|n^2$. Now, suppose $\pi(1)>\pi(k)$ without loss of generality. Divide our $n\times n$ matrix $M$ into blocks of side length $\frac{a}{n}$ (so there are $\frac{n^2}{a}$ blocks on each side). Consider the $\frac{n^2}{a}$ blocks along the main (upper left to lower right) diagonal. Fill each of these blocks with ones, and fill the rest of $M$ with zeroes. The idea is to show that any copy of $A_{\pi}$ must fall entirely into one of the blocks.

How many copies of $A_{\pi}$ are contained in $M$? Recall that a copy of $A_{\pi}$ is given by a set of $k$ entries of $M$ which equal $1$, say at indices $(i_1,j_1),\ldots,(i_k,j_k)$, with $i_1<\cdots<i_k$ and the relative ordering of the $j_k$ given by $\pi$.

There are $a$ ones in $M$ (as required) and so at most $a$ choices for $(i_1,j_1)$. Let $B$ be the $\frac{a}{n}\times\frac{a}{n}$ block containing $(i_1,j_1)$. Now, $i_1<i_k$ and $j_1>j_k$ (since $\pi(1)>\pi(k)$), so we are looking for a point to the lower-left of $(i_1,j_1)$. But since all blocks containing ones are on the main diagonal, $(i_k,j_k)$ must be contained in $B$ as well.

Now, for all $r$, we have that $i_1\leq i_r\leq i_k$, and since $B$ is the only block in its row containing ones, all other entries $(i_r,j_r)$ must be contained in $B$. So for each of the remaining $k-1$ entries $(i_r,j_r)$ with $r>1$, there are at most $|B|=\left(\frac{a}{n}\right)^2$ choices. So in total there are at most
\[a\left(\frac{a}{n}\right)^{2(k-1)}=\frac{a^{2k-1}}{n^{2k-2}}\]
copies of $\pi$ in $M$. Since we only had to adjust $a,n$ by a constant factor in the start, this proves the desired sharpness bounds, completing the proof of Theorem \ref{01matrices}.
\end{proof}

En route to the proof of Theorem \ref{permsupersaturation} in the next section, we bound the number of $0\mhyphen 1$ matrices containing few copies of $A_{\pi}$.

\begin{proposition}\label{matrixcountprop}
Let $k\in\mathbb{Z}^+$ with $k>1$, $\pi\in S_k$ be fixed. There is a constant $C=C(\pi)$ such for all $m,n\geq 0$, the number of $0\mhyphen 1$ matrices of size $n\times n$ containing at most $m$ copies of $A_{\pi}$ is at most
\[\exp\left(C\left(n+\sqrt[2k-1]{mn^{2k-2}}\right)\right).\]
\end{proposition}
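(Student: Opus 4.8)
The plan is to count matrices by first counting their 1-sets directly via an entropy/encoding argument, using Theorem~\ref{01matrices} as the key structural input. The idea is that a matrix with at most $m$ copies of $A_\pi$ cannot have too many ones: by Theorem~\ref{01matrices}, if it has $a$ ones with $a\geq Cn$, then $m\geq C'\frac{a^{2k-1}}{n^{2k-2}}$, which forces
\[a\leq\max\left(Cn,\ \left(\frac{m}{C'}\right)^{\frac{1}{2k-1}}n^{\frac{2k-2}{2k-1}}\right)=O\left(n+\sqrt[2k-1]{mn^{2k-2}}\right).\]
So every matrix we are counting has at most $a_{\max}:=O(n+\sqrt[2k-1]{mn^{2k-2}})$ ones. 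A crude bound would then be $\sum_{a\leq a_{\max}}\binom{n^2}{a}$, but $\binom{n^2}{a}$ can be as large as $\exp(\Theta(a\log(n^2/a)))$, which carries an unwanted logarithmic factor in the exponent and is too weak. The fix, which is the heart of the argument, is to choose the ones one at a time and observe that supersaturation limits how many ``new'' positions are genuinely available.

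First I would set up the iterative selection: think of building the 1-set $S$ by adding ones one at a time, and bound the number of choices at each step. After $j$ ones have been placed, Lemma~\ref{easybound} (or better, Theorem~\ref{01matrices} applied to the current configuration) says the partial matrix already contains many copies of $A_\pi$ once $j$ exceeds $Cn$; since the final matrix has at most $m$ copies, this caps $j$. The cleaner route is to bound the total number of 1-sets of size exactly $a$ that yield at most $m$ copies of $A_\pi$ by $\binom{n^2}{a}$ only after first noting $a\leq a_{\max}$, and then to control $\binom{n^2}{a}$ carefully: when $a=O(n)$ we get $\binom{n^2}{O(n)}\le (en^2/a)^a$, which is $\exp(O(n\log n))$ --- still a log factor too big. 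So the naive approach does not quite work, and this is the main obstacle: \emph{removing the logarithmic factor in the exponent}.

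To remove the log, I would use the standard container-free ``greedy with supersaturation'' encoding: rather than recording each 1-position as an element of $[n^2]$ (costing $\log(n^2)$ bits each), record the ones in a way that exploits that, in a matrix with $a$ ones, most rows and columns are ``typical.'' Concretely, partition $[n]$ into $O(a/n)$ row-blocks and column-blocks of equal size so that a random $O(n^2/a)\times O(n^2/a)$ submatrix has $O(1)$ expected ones; by Theorem~\ref{01matrices}/supersaturation applied at this scale, a matrix with few copies of $A_\pi$ must have its ones spread so that only $O(a/n)\cdot O(a/n)$ blocks are nonempty and each holds $O((n^2/a)^2\cdot \text{density})=O(n^2/a)$ ones --- wait, more carefully: the right statement is that we encode $S$ by first specifying, for each of the $a$ ones, which block it lies in, and then its position within the block. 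The block has side $n^2/a$, so position-within-block costs $2\log(n^2/a)$ bits, and there are $(a/n)^2$ blocks so the block-label costs $2\log(a/n)$ bits; total $2\log(n^2/a)+2\log(a/n)=2\log(n)\cdot$hmm. The actual mechanism that works, following \cite{FMS}, is \emph{recursive}: apply Theorem~\ref{01matrices} at scale $r$, conclude the number of ones is controlled, pass to a coarser matrix (one entry per block), which itself avoids many copies of $A_\pi$ at the coarser scale, and iterate $O(\log\log n)$ or $O(1)$ times until the count telescopes without a genuine $\log n$ loss --- the geometric decay of block sizes means $\sum_i a_i\log(\text{stuff}_i)$ sums to $O(a)=O(n+\sqrt[2k-1]{mn^{2k-2}})$.

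Thus the proof outline is: (i) use Theorem~\ref{01matrices} to bound the number of ones $a$ by $a_{\max}=O(n+\sqrt[2k-1]{mn^{2k-2}})$; (ii) for each value of $a$, bound the number of valid 1-sets of that size by a recursive/block encoding argument that, crucially, costs only $O(1)$ bits per one rather than $O(\log n)$, so the count is $\exp(O(a))$; (iii) sum over $a\le a_{\max}$, picking up only an extra polynomial (hence $\exp(O(n))$, even $\exp(O(a_{\max}))$) factor; (iv) also multiply by the at most $2^{n^2}$... no --- once $S$ is fixed the matrix is determined, so no extra factor is needed. The bound $\exp(C(n+\sqrt[2k-1]{mn^{2k-2}}))$ follows. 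The one real difficulty, as emphasized, is step (ii): getting the per-one cost down to $O(1)$, for which I would mirror the argument of \cite{FMS} using Theorem~\ref{01matrices} as the supersaturation engine at each recursive scale, and I would double-check that the recursion depth times the per-level overhead stays $O(a_{\max})$.
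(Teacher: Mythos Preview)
Your proposal eventually lands on the paper's approach, though you reach it circuitously after several false starts. The recursive coarsening you describe near the end --- pass to a coarser matrix with one entry per block, observe it still has at most $m$ copies of $A_\pi$, apply Theorem~\ref{01matrices} to bound its ones, and iterate --- is exactly what the paper does, packaged as Klazar's $2$-contraction. Concretely: let $M'$ be the $(n/2)\times(n/2)$ matrix whose $(i,j)$-entry is $1$ iff the corresponding $2\times 2$ block of $M$ is nonzero. Then $M'$ also has at most $m$ copies of $A_\pi$, and at most $15^{\#\text{ones in }M'}$ matrices $M$ contract to a given $M'$, so with $f(n,m)$ the count in question,
\[
f(n,m)\ \le\ f(n/2,m)\cdot 15^{\,O\!\left(n+\sqrt[2k-1]{mn^{2k-2}}\right)}
\]
by Theorem~\ref{01matrices}, and the recursion solves by summing a geometric series.

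Two corrections to your sketch. First, the recursion runs $O(\log n)$ times (halving $n$), not $O(\log\log n)$ or $O(1)$; the log-free exponent comes from the geometric decay of $n/2^i+\sqrt[2k-1]{m(n/2^i)^{2k-2}}$ over those $O(\log n)$ levels, not from a short recursion depth. Second, the paper does \emph{not} first stratify by the number of ones $a$ and then attempt an ``$O(1)$ bits per one'' encoding; it recurses directly on $f(n,m)$, which is substantially cleaner than the per-$a$ entropy arguments you were wrestling with and completely bypasses the $\binom{n^2}{a}$ obstacle you identified.
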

\begin{remark}
Notice that Proposition \ref{matrixcountprop} immediately implies the same bound on the number of permutations of length $n$ avoiding $\pi$ (since each such permutation matrix yields a $0\mhyphen 1$ matrix of size $n\times n$ avoiding $A_{\pi}$). Unfortunately, this bound is not as strong as the one we need to prove Theorem \ref{permsupersaturation}. However, we will be able to bootstrap Proposition \ref{matrixcountprop} to prove the full result by a technique involving `contracting' each permutation matrix containing few copies of $\pi$ to a smaller $0\mhyphen 1$ matrix, and then applying the Proposition to this smaller matrix.

The explanation for the expression $n+\sqrt[2k-1]{mn^{2k-2}}$ is that Theorem \ref{01matrices} implies that any $0\mhyphen 1$ matrix $M$ of size $n\times n$ containing at most $m$ copies of $A_{\pi}$ has at most $O\left(n+\sqrt[2k-1]{mn^{2k-2}}\right)$ ones. This is because if this does not hold, then letting $a$ be the number of ones in $M$, we have that both $a\gg n$ and $\frac{a^{2k-1}}{n^{2k-2}}\gg m$, which contradicts the Theorem.
\end{remark}
\begin{proof}
The proof here parallels Klazar's proof that F\"uredi-Hajnal implies Stanley-Wilf given in \cite{K}. The idea is to `contract' any $n\times n$ matrix $M$ containing few copies of $A_{\pi}$ to an $n/2\times n/2$ matrix $M'$ by dividing $M$ up into $2\times 2$ boxes and assigning each box a $1$ if and only if there are any ones in the box in $M$. If $M$ contains few copies of $A_{\pi}$, $M'$ must also, so by Theorem \ref{01matrices} it must contain few ones. But the number of possible values of $M$ given $M'$ is exponential in the number of ones of $M'$, so we may upper bound the number of possible choices for $M$ by the number of possible choices for $M'$. This will give a recursion yielding the desired bound.

In particular, let $S(n,m)$ be the set of $0\mhyphen 1$ matrices of size $n\times n$ containing at most $m$ copies of $\pi$, and let $f(n,m)=|S(n,m)|$. For a $0\mhyphen 1$ matrix $M$ of size $n\times n$ with $2|n$,  let the $2$-contraction of $M$ be the $n/2\times n/2$ $0\mhyphen 1$ matrix $M'$ such that $M'_{i,j}=0$ if and only if $M_{2i-1,2j-1}=M_{2i-1,2j}=M_{2i,2j-1}=M_{2i,2j}=0$.

Now, for each copy of $A_{\pi}$ in $M'$, there is at least one corresponding copy of $A_{\pi}$ in $M$. This is because a copy of $A_{\pi}$ in $M'$ corresponds to a choice of $k$ $1$-entries of $M'$ with relative row- and column- ordering given by $\pi$, and each $1$-entry of $M'$ corresponds (in an order-preserving way) to at least one $1$-entry of $M$. Thus $M$ contains at least as many copies of $A_{\pi}$ as its $2$-contraction $M'$, so $M'$ must also contain at most $m$ copies of $A_{\pi}$.

Therefore, if $M\in S(n,m)$, then we must have $M'\in S(n/2,m)$, where $M'$ is the $2$-contraction of $M$. Thus
\begin{equation}\label{01recursiontechnique}
f(n,m)=|S(n,m)|\leq\displaystyle\sum_{M'\in S(n/2,m)}\left|\{M:M'\text{ is the }2\text{-contraction of }M\}\right|.
\end{equation}
Now, given a matrix $M'$, how many matrices $M$ $2$-contract to $M'$? For every $0$-entry of $M'$, the corresponding four entries of $M$ must be $0$, so there are no choices to be made. For every $1$-entry of $M'$, the corresponding four entries of $M$ may be either $1$ or $0$ (but not all $0$), so there are $15$ choices for those entries of $M$. Thus there are $15^{(\#\text{ of ones in }M')}$ matrices that $2$-contract to $M'$. Combining this with (\ref{01recursiontechnique}), we obtain that
\begin{equation}\label{01recursiontechnique2}
f(n,m)\leq\displaystyle\sum_{M'\in S(n/2,m)}15^{(\#\text{ of ones in }M')}\leq f(n/2,m)\cdot 15^{\displaystyle\max_{M'\in S(n/2,m)}(\#\text{ of ones in }M')}
\end{equation}
We now apply Theorem \ref{01matrices}. For $M'\in S(n/2,m)$, we know that $M'$ has at most $m$ copies of $A_{\pi}$ by definition, so by Theorem \ref{01matrices} it must have at most
\[O\left(n/2+\sqrt[2k-1]{m(n/2)^{2k-2}}\right)=O\left(n+\sqrt[2k-1]{mn^{2k-2}}\right)\]
ones (by the discussion at the beginning of the proof). Substituting into (\ref{01recursiontechnique2}),
\begin{equation*}
f(n,m)\leq f(n/2,m)\cdot \exp(C_0(n+\sqrt[2k-1]{mn^{2k-2}})).
\end{equation*}
for some $C_0=C_0(\pi)$. This recursion is fairly easy to solve; we see that for $a\in\mathbb{Z}^{\geq 0}$
\begin{align*}
\log(f(2^a,m)) & \leq\log(f(1,m))+C_0\displaystyle\sum_{i=1}^a\left(2^i+\sqrt[2k-1]{m2^{i(2k-2)}}\right) \\ & \leq 1+C_0\left(2^{a+1}+\sqrt[2k-1]{m}\cdot \frac{2^{\frac{(a+1)(2k-2)}{2k-1}}}{2^{\frac{2k-2}{2k-1}}-1}\right) \\ & \leq (C_0+1)\left(2^{a+1}+2\cdot\sqrt[2k-1]{m\cdot 2^{(a+1)(2k-2)}}\right),
\end{align*}
where we simply summed the geometric series and used that $\log(f(1,m))\leq\log(2)\leq 1$ and that $2^{\frac{2k-2}{2k-1}}\geq\frac{3}{2}$ for $k\geq 2$. Now, $f(n,m)$ is nondecreasing in $n$ (as we may `pad' any $n\times n$ matrix with zeroes to form an $n'\times n'$ matrix with the same number of copies of $A_{\pi}$, and this process is injective). For any $n$, take $a\in\mathbb{Z}^{\geq 0}$ such that $2^{a-1}<n\leq 2^a$. Then by the previous computation,
\begin{align*}
\log(f(n,m)) & \leq\log(f(2^a,m)) \\ & \leq (C_0+1)\left(2^{a+1}+2\cdot\sqrt[2k-1]{m\cdot 2^{(a+1)(2k-2)}}\right) \\ & \leq (C_0+1)\left(4n+2\cdot\sqrt[2k-1]{m\cdot (4n)^{2k-2}}\right) \\ & \leq 8(C_0+1)\left(n+\sqrt[2k-1]{mn^{2k-2}}\right).
\end{align*}
Letting $C=8(C_0+1)$ completes the proof of Proposition \ref{matrixcountprop}.
\end{proof}

Now that we have bounded the total number of $0\mhyphen 1$ matrices that contain few copies of $A_{\pi}$, in the next section we may bound the number of permutations that contain few copies of $\pi$.
\section{Permutations with Few Copies of $\pi$}\label{randomproof}
This section will be devoted to the proof of Theorem \ref{permsupersaturation}.
\subsection{Proof of the Upper Bound}
Our proof of the upper bound of Theorem \ref{permsupersaturation} will proceed in the following steps.
\begin{enumerate}
\item For a suitable $b\in\mathbb{Z}^+$ and for any matrix permutation $\sigma$ containing few copies of $\pi$, take the $b$-contraction (analogous to the $2$-contraction in the last section) of the matrix $A_{\sigma}$ to get some $n/b\times n/b$ matrix $B_{\sigma}$.

\item $B_{\sigma}$ is a $0\mhyphen 1$ matrix containing few copies of $A_{\pi}$, so we can apply Proposition \ref{matrixcountprop}. Thus as $\sigma$ ranges over all permutations in $S_n$ containing few copies of $\pi$, $B_{\sigma}$ ranges through only a small number of distinct matrices. Thus to bound the number of $\sigma$ containing few copies of $\pi$, it suffices to bound for any matrix $B$ the number of $\sigma\in S_n$ such that $B_{\sigma}=B$.

\item We bound the desired quantity $|\{\sigma\in S_n: B_{\sigma}=B\}|$ for any $n/b\times n/b$ matrix $B$ using a simple counting argument.
\end{enumerate}
The key is that since we are only applying Proposition \ref{matrixcountprop} to an $n/b\times n/b$ matrix instead of an $n\times n$ matrix, we can obtain a much better bound (as long as $b$ is chosen accordingly).
\begin{step}
Let 
\[S_n(m,\pi):=\{\sigma\in S_n:\sigma\text{ contains at most }m\text{ copies of }\pi\}.\]
To prove the upper bound of Theorem \ref{permsupersaturation}, we would like to show that
\[|S_n(m,\pi)|\leq\exp(O(n))\cdot\max\left(1,\left(\frac{m}{n}\right)^{\frac{n}{k-1}}\right).\]
First suppose $m<n$, such that the max is dominated by the first term. Then Proposition \ref{matrixcountprop} guarantees that the number of  $0\mhyphen 1$ matrices of size $n\times n$ containing at most $m$ copies of $A_{\pi}$ is at most $\exp(O(n))$. Since each $\sigma\in S_n$ containing at most $m$ copies of $\pi$ gives rise to a permutation matrix $A_{\sigma}$ that contains at most $m$ copies of $A_{\pi}$, we see that the number of $\sigma$ containing at most $m$ copies of $\pi$ is $\exp(O(n))$, as desired.

Now suppose $m\geq n$. Take $b=\sqrt[2k-2]{\frac{m}{n}}$. Just as we took the $2$-contraction of a matrix in the proof of Proposition \ref{matrixcountprop}, we will define the $b$-contraction of any $0\mhyphen 1$ matrix of size $n\times n$. The $b$-contraction of such a matrix $A$ is the $0\mhyphen 1$ matrix $B$ such that the dimensions of $B$ are $\left\lceil\frac{n}{b}\right\rceil\times\left\lceil\frac{n}{b}\right\rceil$, and such that $B_{i,j}=1$ if and only if there exists $i',j'$ with $\left\lceil\frac{i'}{b}\right\rceil=i$ and $\left\lceil\frac{j'}{b}\right\rceil=j$ such that $A_{i',j'}=1$ (so if $A_{i',j'}=0$ for all such $i',j'$, then $B_{i,j}=0$). Let
\[n':=\left\lceil\frac{n}{b}\right\rceil=\left\lceil\sqrt[2k-2]{n^{2k-1}m^{-1}}\right\rceil\]
so that $B$ is here an $n'\times n'$ matrix.

For all $\sigma\in S_n$, let $B_{\sigma}$ be the $b$-reduction of $A_{\sigma}$.
\end{step}
\vspace{12pt}
\begin{step}
Similarly to the proof of Proposition \ref{matrixcountprop} (with the $2$-contraction), any occurrence of $A_{\pi}$ in $B_{\sigma}$ will correspond to at least one occurrence of $A_{\pi}$ in $A_{\sigma}$. This again comes from, for each $1$-entry in $B$ appearing in that occurrence of $A_{\pi}$, choosing a corresponding $1$-entry of $A$, and realizing that these $1$-entries have the same relative row- and column-ordering.

Now, we have shown that each occurrence of $A_{\pi}$ in $B_{\sigma}$ gives rise to at least one occurrence of $A_{\pi}$ in $A_{\sigma}$ (it is easy to see that these occurrences are all distinct), and the occurrences of $A_{\pi}$ in $A_{\sigma}$ correspond to occurrences of $\pi$ in $\sigma$. Thus for all $\sigma\in S_n(m,\pi)$, the matrix $B_{\sigma}$ contains at most $m$ copies of $A_{\pi}$.

By Proposition \ref{matrixcountprop} (using the fact that $m>n\geq n'$), there are at most $\exp\left(C\left(\sqrt[2k-1]{m{n'}^{2k-2}}\right)\right)$ matrices of dimension $n'\times n'$ that contain at most $m$ copies of $A_{\pi}$ (for $C=C(\pi)$). So as $\sigma$ ranges over all elements of $S_n(m,\pi)$, $B_{\sigma}$ ranges over at most
\begin{align*}
\exp\left(C\left(\sqrt[2k-1]{m{n'}^{2k-2}}\right)\right) & \leq\exp\left(C\left(\sqrt[2k-1]{m\left(2\sqrt[2k-2]{n^{2k-1}m^{-1}}\right)^{2k-2}}\right)\right) \\ & =\exp\left(C\left(\sqrt[2k-1]{2^{2k-2}mn^{2k-1}m^{-1}}\right)\right) \\ & \leq\exp(2Cn)
\end{align*}
different matrices (where we used the fact that $n'=\left\lceil\sqrt[2k-2]{n^{2k-1}m^{-1}}\right\rceil\leq 2\sqrt[2k-2]{n^{2k-1}m^{-1}}$ as $m\leq\binom{n}{k}<n^{2k-1}$). Therefore,
\begin{align}
|S_n(m,\pi)| & =\displaystyle\sum_{B\text{ size }n'\times n'}\left|\{\sigma\in S_n(m,\pi):B_{\sigma}=B\}\right| \\ & \leq\exp(2Cn)\cdot\displaystyle\max_{B\text{ size }n'\times n'}\left|\{\sigma\in S_n(m,\pi):B_{\sigma}=B\}\right| \\ & \label{contractionbound}\leq\exp(2Cn)\cdot\displaystyle\max_{B\text{ size }n'\times n'}\left|\{\sigma\in S_n:B_{\sigma}=B\}\right|.
\end{align}
\end{step}
\vspace{12pt}
\begin{step}
It only remains to bound $\displaystyle\max_B\left|\{\sigma\in S_n:B_{\sigma}=B\}\right|$ from above for all $0\mhyphen 1$ matrices $B$ of size $n'\times n'$. That is, we must prove an upper bound on the number of permutation matrices of size $n\times n$ that $b$-contract to a particular matrix. 

Now, since $A_{\sigma}$ is a permutation matrix, it has $n$ ones. By the definition of $b$-contraction, $B_{\sigma}$ must have at most $n$ ones. So in computing $\displaystyle\max_B\left|\{\sigma\in S_n:B_{\sigma}=B\}\right|$ we may assume $B$ is an $n'\times n'$ matrix with at most $n$ ones.

Let $B$ be such a matrix, and suppose there are $a_i$ ones in the $i^{th}$ row of $B$. Then $\displaystyle\sum_{i=1}^{n'}a_i\leq n$. How many choices are there for $\sigma$ such that $B_{\sigma}=B$? Consider the first row of $A_{\sigma}$, in which there is exactly one $1$. This $1$, when we take the $b$-reduction, must correspond to a $1$ of $B$ in the first row of $B$. There are $a_1$ such ones in the first row of $B$, and each one corresponds to at most $\left\lceil b\right\rceil$ entries in the first row of $A_{\sigma}$. Thus there are at most $\left\lceil b\right\rceil\cdot a_1$ ways to choose the position of the $1$ in the first row of $A_{\sigma}$--in other words, to choose $\sigma(1)$.

Similarly, the $1$-entry in the $i^{th}$ row of $A_{\sigma}$ must correspond to a $1$-entry in the $\left\lceil\frac{i}{b}\right\rceil^{th}$ row of $B$, so there are at most $\left\lceil b\right\rceil\cdot a_{\left\lceil\frac{i}{b}\right\rceil}$ ways to choose the value of $\sigma(i)$. This implies that the total number of choices for $\sigma$ such that $B_{\sigma}=B$ is at most
\begin{equation}\label{aiproductbound}
\displaystyle\prod_{i=1}^{n}\left\lceil b\right\rceil\cdot a_{\left\lceil\frac{i}{b}\right\rceil}=\left\lceil b\right\rceil^n\displaystyle\prod_{i=1}^{n}a_{\left\lceil\frac{i}{b}\right\rceil}.
\end{equation}
Now, in the sum
\begin{equation}\label{aisum}
\displaystyle\sum_{i=1}^{n}a_{\left\lceil\frac{i}{b}\right\rceil},
\end{equation}
every particular $a_j$ occurs at most $\left\lceil b\right\rceil$ times, once for every $i$ such that $bj-b<i\leq bj$. Thus (\ref{aisum}) is bounded by $\left\lceil b\right\rceil\displaystyle\sum_{j=1}^{n'}a_j\leq\left\lceil b\right\rceil\cdot n$. So by the AM-GM inequality,
\[\displaystyle\prod_{i=1}^{n}a_{\left\lceil\frac{i}{b}\right\rceil}\leq\left\lceil b\right\rceil^n.\]
Substituting into (\ref{aiproductbound}), we see that there are at most $\left\lceil b\right\rceil^{2n}$ choices for $\sigma$ such that $B_{\sigma}=B$. Finally, substituting into (\ref{contractionbound}), we have derived that
\[|S_n(m,\pi)|\leq\exp(2Cn)\left\lceil b\right\rceil^{2n}.\]
Now by definition, $b=\sqrt[2k-2]{\frac{m}{n}}$, and $m\geq n$, so $b\geq 1$ and $\left\lceil b\right\rceil\leq 2b=2\sqrt[2k-2]{\frac{m}{n}}$. Therefore,
\[\left\lceil b\right\rceil^{2n}\leq4^n\left(\frac{m}{n}\right)^{\frac{n}{k-1}}.\]
This implies that
\[|S_n(m,\pi)|\leq\exp((2C+2)n)\left(\frac{m}{n}\right)^{\frac{n}{k-1}},\]
and replacing $2C+2$ by $C$ finishes the proof of the upper bound in Theorem \ref{permsupersaturation}.
\end{step}
\subsection{Proof of the Lower Bound}
To prove the lower bound of Theorem \ref{permsupersaturation}, we must exhibit at least
\[\exp(-O(n))\cdot\max\left(1,\left(\frac{m}{n}\right)^{\frac{n}{k-1}}\right)\]
permutations $\sigma\in S_n$ such that $\sigma$ contains at most $m$ copies of $\pi$.

Suppose without loss of generality that $\pi(1)>\pi(k)$. For $m\leq n$ the all-increasing permutation avoids $\pi$, so we get a lower bound of $1$, which is sufficient.

Now suppose $m>n$. Note that $S_n(m,\pi)$ is nondecreasing in $m$ and that changing $m$ by at most a constant multiple does not change our desired lower bound by more than an exponential factor. Thus we may without loss of generality modify $m$ by a constant multiple. In particular, we may assume without loss of generality that $\frac{m}{n}$ is a $(k-1)^{st}$ power, say $a^{k-1}=\frac{m}{n}$, $a\in\mathbb{Z}^+$.

Let $S_{n,a}$ be the set of permutations $\sigma\in S_n$ such that:

$\sigma(1),\ldots,\sigma(a)$ is a permutation of $1,\ldots,a$

$\sigma(a+1),\ldots,\sigma(2a)$ is a permutation of $a+1,\ldots,2a$

$\vdots$

$\sigma\left(\left(\left\lfloor\frac{n}{a}\right\rfloor-1\right)a+1\right),\ldots,\sigma\left(\left\lfloor\frac{n}{a}\right\rfloor a\right)$ is a permutation of $\left(\left\lfloor\frac{n}{a}\right\rfloor-1\right)a+1,\ldots,\left\lfloor\frac{n}{a}\right\rfloor a$

$\sigma\left(\left\lfloor\frac{n}{a}\right\rfloor a+1\right),\ldots,n$ is a permutation of $\left\lfloor\frac{n}{a}\right\rfloor a+1,\ldots,n$.

\vspace{12pt}
It suffices to prove that
\begin{enumerate}
\item $|S_{n,a}|\geq\exp(-O(n))\left(\frac{m}{n}\right)^{\frac{n}{k-1}}$, and
\item $|S_{n,a}|\in S_n(m,\pi)$; that is, any element of $S_{n,a}$ contains at most $m$ copies of $\pi$.
\end{enumerate}

Let $n=qa+r$, $q,r\in\mathbb{Z}^{\geq 0}$, $r<a$. Then $|S_{n,a}|=\left(a!\right)^q\cdot r!$. Since $t!\geq\left(\frac{t}{e}\right)^t$ for all $t\in\mathbb{Z}^{\geq 0}$ (using $0^0=1$), we see that
\begin{align*}
|S_{n,a}| & \geq\left(\frac{a}{e}\right)^{qa}\left(\frac{r}{e}\right)^r \\ & =\left(\frac{r}{a}\right)^r\left(\frac{a}{e}\right)^n,
\end{align*}
as $qa+r=n$.
Now, the function $x^x$ is minimized for $x\in[0,1]$ when $x=\frac{1}{e}$, so $x^x\geq e^{-\frac{1}{e}}$. Thus $\left(\frac{r}{a}\right)^r=\left(\frac{r}{a}\right)^{a\frac{r}{a}}\geq \exp(-\frac{a}{e})$. Now, $m\leq\binom{n}{k}<n^k$, and therefore $a<n$. Thus
\[\left(\frac{r}{a}\right)^r\geq\exp(-n).\]

Therefore,
\[|S_{n,a}|\geq a^n\exp(-2n)=\exp(-2n)\left(\frac{m}{n}\right)^{\frac{n}{k-1}}.\]
This gives is our desired bound on $|S_{n,a}|$. It thus suffices to show that any element of $S_{n,a}$ contains at most $m$ copies of $\pi$.

Suppose $\sigma\in S_{n,a}$. At what indices can $\pi$ occur in $\sigma$? Let $\pi$ occur at some set of $k$ indices $i_1<\cdots<i_k$. Then since $\pi(1)>\pi(k)$, we must have $\sigma(i_1)>\sigma(i_k)$, while of course $i_1<i_k$. By the definition of $S_{n,a}$, this can only occur when $\left\lceil\frac{i_1}{a}\right\rceil=\left\lceil\frac{i_k}{a}\right\rceil$. Since $i_1<\cdots<i_k$, this means that there is some $t$, $0\leq t\leq q$, such that $ta+1\leq i_1<\cdots<i_k\leq (t+1)a$ (where again $qa+r=n$, $r<a$).

Given a particular value of $t$, there are thus at most $\binom{a}{k}$ choices for $(i_1,\ldots,i_k)$. However, if $t=q$, we have that $qa+1\leq i_1<\cdots<i_k\leq qa+r=n$, so there are in this case only at most $\binom{r}{k}$ choices for $(i_1,\ldots,i_k)$. Thus the total number of occurrences of $\pi$ in $\sigma$ is at most
\begin{align*}
q\binom{a}{k}+\binom{r}{k} & <qa^k+r^k \\ & \leq qa^k+ra^{k-1} \\ & \leq (qa+r)a^{k-1} \\ & =na^{k-1} \\ & =m.
\end{align*}
Thus $S_{n,a}\subseteq S_n(m,\pi)$, so we have proved the lower bound and we are done.

\section{The Fixed Hypergraph Case}\label{fixedcase}

For fixed positive integer $k$, let $\Lambda$ be a $k$-uniform hypergraph on $n$ vertices satisfying the preconditions of Theorem \ref{fixedtheorem}.  That is, for some $L$, $\Lambda$ contains a collection of $L$-vertex cliques where each of the $n$ vertices belongs to at least $\delta(\Lambda) \geq 1$ cliques in the collection and at most $\Delta(\Lambda) = O(1)$.

We would like to show that for every permutation $\pi\in S_k$,
\[Av_{n,\Lambda}(\pi)=O\left(\p{\frac{n\log^{2+\epsilon}n}{L}}^n\right)\]
for all $\epsilon > 0$.

For $L=\Theta(n^c)$ with $c \in (0,1]$, this bound is a strict improvement on the $n!$ total $n$-permutations.  For $L=n$, we match the Stanley-Wilf conjecture up to a log-exponential factor $\log(n)^{O(n)}$, which is asymptotically dominated by the linear exponential term $n^{O(n)}$ of the conjecture.\\

It may seem unnatural at first to restrict our arguments only to hypergraphs containing polynomially large cliques.  However, we see that there are very dense hypergraphs $\Lambda^*$ with $O(1)$ maximal clique size for which the number of $n$-permutations $\pi$ that are $\Lambda^*$-avoiding is $O(n)^n$.  For example, consider partitioning the vertices of $\Lambda^*$ into two parts, $\{1,\cdots,n/2\}$ and $\{n/2+1,\cdots,n\}$, and adding an edge to $\Lambda^*$ for every collection of $k$ vertices not entirely lying in a single part.  This graph will be very dense, containing ${n \choose k}-2{n/2 \choose k} \approx (1-\frac{1}{2^k}){n \choose k}$ edges.  However, there is a large class of $n$-permutations avoiding $\pi$ on these edges.  Say, without loss of generality, that $\pi(1)<\pi(k)$.  We see that all $n$-permutations $\sigma$ in which the $n/2$ largest elements belong in the first $n/2$ indices and the $n/2$ smallest elements belong in the last $n/2$ indices necessarily $\Lambda^*$-avoid $\pi$.  Each edge of $\Lambda^*$ corresponds to a sub permutation $(\sigma(x_1),\cdots,\sigma(x_k))$ in which $\sigma(x_1)>\sigma(x_k)$ and so it cannot be a copy of $\pi$.  There are $(n/2)!^2\approx \p{\frac{n}{2e}}^n = O(n)^n$ such permutations, and so there is no meaningful bound we can prove on the number of $\Lambda^*$-avoidant $n$-permutations.\\

Importantly, multipartite graphs are characterized by their small maximal cliques.  The bipartite graph we considered has maximal clique $2(k-1)$, taking $k-1$ vertices from each part.  Thus, our bounds on $\Lambda$-avoidance being contingent on $\Lambda$ containing large cliques is necessary.

\section{Hypergraph Formulation of Pattern-Avoidance}\label{formulation}

We consider a $k$-uniform hypergraph $H$ on an $n \times n$ grid of vertices $V(H)$, which we index $v(i,j)$.  Define a canonical set to be a subset of $V(H)$ of size $n$ containing exactly one vertex from each row and each column. We see that a canonical set corresponds bijectively to an $n$-permutation $\sigma$.  For a $k$-permutation $\pi$, we add edges to $H$ in such a way that each canonical set is independent if and only if its corresponding $n$-permutation is $\Lambda$-avoidant of $\pi$.  Essentially, we add an edge for each copy of $\pi$ in the vertices on columns in $E(\Lambda)$.  For all $1 \leq x_1 < x_2 < \cdots < x_k \leq n$ with $\{x_1,\cdots,x_k\} \in E(\Lambda)$ and all $1 \leq y_1< \cdots < y_k \leq n$, we have $\{v(x_1,y_{\pi(1)}),v(x_2,y_{\pi(2)}),\cdots,v(x_k,y_{\pi(k)})\} \in E(H)$.  We see that a canonical set containing the vertices of this edge would correspond to a permutation $\sigma$ that contains a copy of $\pi$ at indices $x_1,\cdots,x_k$, as desired.\\


We want to show that the number of $n$-permutations that $\Lambda$-avoid $\pi$ is $O\p{\frac{n\log^{2+\epsilon}n}{L}}^n$.  Since each permutation corresponds to a single canonical set, we want to show that the number of independent canonical sets is $O\p{\frac{n\log^{2+\epsilon}n}{L}}^n$.  In fact, our goal will be to prove a stronger claim, that the number of independent sets of size $n$, of which the independent canonical sets are a subset, is $O\p{\frac{n\log^{2+\epsilon}n}{L}}^n$.

\section{The Hypergraph Containers Lemma}\label{HC}

We introduce a version of the hypergraph container lemma due to Balogh, Morris, and Samotij \cite{BMS}.  Essentially, the container lemma is a means of placing the vertices of a hypergraph into a collection of containers $\mathcal{C}$ in such a way that each independent set in the hypergraph belongs to one of the containers.  Additionally, we ensure that no individual container contains too many vertices and that the number of containers isn't too large.  We let $\Delta_{\ell}(\cH)$ be the maximum number of hyperedges of $\cH$ that contain a given set of $\ell$ vertices.\\

\begin{proposition}[\cite{BMS} Theorem 2.2]\label{containerlemma}
Let $\mathcal{H}$ be a $k$-uniform hypergraph and let $K$ be a constant. There exists a constant $g \in (0,1)$ depending only on $k$ and $K$ such that the following holds. Suppose that for some $p\in(0,1)$ and all $\ell \in \{1, \dotsc, k\}$,
$$\Delta_{\ell}(\mathcal{H}) \le K\cdot p^{\ell-1}\cdot\frac{e(\mathcal{H})}{v(\mathcal{H})}$$
  Then, there exists a family $\cC\subseteq \mathcal P(V(\cH))$ of \emph{containers} with the following properties:
  \begin{enumerate}
  \item
    $|\cC| \leq \binom{v(\cH)}{\leq k p v(\cH)} \leq \left(\frac{e}{kp}\right)^{k p v(\cH)}$,
  \item
    $|G| \leq (1-g) \cdot v(\cH)$ for each $G \in \cC$,
  \item
    each independent set of $\cH$ is contained in some $G \in \cC$.
  \end{enumerate}
\end{proposition}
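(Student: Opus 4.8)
The statement is precisely the container lemma of Balogh, Morris, and Samotij, and the cleanest route is to cite \cite{BMS}; I sketch here the shape of that proof. The heart of the argument is to construct, from $\cH$ and $p$ alone, a deterministic rule assigning to each independent set $I$ of $\cH$ a \emph{fingerprint} $S = S(I) \subseteq I$ with $|S| \le kpv(\cH)$ together with a \emph{container} $C(S) \supseteq I$ that depends only on $S$ (and on $\cH$), not on $I$. Granting such a rule, the proposition is formal: set $\cC := \{\, C(S) : S \subseteq V(\cH),\ |S| \le kpv(\cH)\,\}$, so property (3) holds by construction; padding every fingerprint to have exactly $kpv(\cH)$ vertices gives $|\cC| \le \binom{v(\cH)}{kpv(\cH)}$, and the elementary bound $\binom{N}{t} \le (eN/t)^t$ with $N = v(\cH)$ and $t = kpv(\cH)$ yields the right-hand side of property (1). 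All the real content sits in property (2).

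To produce the fingerprint I would run the iterative ``max-degree'' fingerprinting algorithm of \cite{BMS}. Fix an order on $V(\cH)$ and maintain an available set $A$ (initially $V(\cH)$) and a fingerprint $S$ (initially $\emptyset$); at each step one examines, across the co-degree levels $\ell = 1, \dots, k$, the appropriate link/residual hypergraphs on $A$ and locates a vertex whose relevant degree exceeds the current average. If that vertex lies in $I$ it is appended to $S$; in every case one deletes from $A$ all vertices forced out by the choice (those that would complete an edge together with the already-selected structure). The hypotheses $\Delta_\ell(\cH) \le K p^{\ell-1} e(\cH)/v(\cH)$ are tuned exactly so that (i) in total at most $kpv(\cH)$ vertices are ever appended to $S$, and (ii) the vertices deleted along the way, together with those left ``unavailable'' at termination, comprise at least a fixed fraction $g = g(k,K)$ of $V(\cH)$. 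Since a deleted vertex cannot belong to any independent set with the given fingerprint, setting $C(S) := A_{\mathrm{final}} \cup S$ gives a set of size at most $(1-g)v(\cH)$ that contains $I$, which is property (2).

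The step I expect to be the genuine obstacle is point (ii): that a fingerprint of size only $O(pv(\cH))$ already excludes a constant proportion of the vertices. This is a multi-level induction in which one must define the residual $j$-uniform hypergraphs for $j = k, k-1, \dots, 1$ with care and feed all $k$ co-degree bounds into a single accounting, controlling the number of edges ``exposed'' per selected vertex at each level and --- crucially --- keeping $g$ independent of $v(\cH)$. For $k = 2$ this collapses to the classical Kleitman--Winston / Sapozhenko container argument, which is short; the passage to general $k$ is the technical core carried out in \cite{BMS}, which I would invoke rather than reproduce here.
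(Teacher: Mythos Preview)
Your proposal is correct and in fact goes further than the paper: the paper does not prove this proposition at all but simply quotes it as a black-box result from \cite{BMS}, exactly as you suggest doing. Your added sketch of the fingerprinting algorithm is a bonus the paper does not attempt.
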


This lemma is extremely useful in bounding the number of independent sets of a hypergraph, as the number of independent sets is upper bounded by the sum of the number of independent sets in each container.  Or, in our context, the number of independent sets of size $n$ in $H$ is upper bounded by the total number of independent sets of size $n$ over all the containers.  However, a single application of the container lemma to our problem will not be strong enough for our purposes, as a single container can still contain $(1-g)|V(H)|=(1-g)n^2$ vertices and potentially have ${(1-g)n^2 \choose n} = O(n)^n$ many independent sets of size $n$.  So, we will apply the lemma recursively.  Each time we encounter a container with too many vertices, we apply the lemma to the subgraph induced by the vertices of the container and further break it up into more containers.  We do this until all the containers are sufficiently small.  Namely, we will attempt to apply the container lemma recursively until all the containers have $\leq U = \frac{Cn^2 \log^{2+\epsilon} n}{L}$ vertices, for a constant $C$ that will only be in terms of $k$ and $\pi$.  Once the containers are this small, a naive upper bound will give us that the number of size-$n$ independent sets in a container is at most ${U \choose n} = O\p{\frac{n\log^{2+\epsilon}n}{L}}^n$.\\

Unfortunately, since we know nothing about the structure of the containers, we have no guarantee that the necessary $\Delta_\ell$ bounds will hold, which are required to apply the lemma to a container.  To overcome this problem we employ a strategy similar to that used by Morris and Saxton \cite{MS}.  Consider a subgraph $G$ of $H$ induced by some subset/container of the vertices.  If we remove some of the edges of $G$ to produce a new subgraph $G'$, then every independent set of $G$ will also be an independent set of $G'$.  So, if we apply the containers lemma to $G'$, the resulting containers will also cover all the independent sets of $G$.  This will be our approach: for each container-induced subgraph $G$ with more than $U$ vertices, we construct a subgraph $G' \subseteq G$ on the same vertex set with some edges of $G$ removed.  We construct $G'$ to satisfy the preconditions of Proposition \ref{containerlemma} for a sufficiently small $p$, and so, we can break up $G'$ into containers using the proposition and recurse.  In this recursive process, we guarantee that all of the independent sets in the original $H$ are preserved.  We also ensure that we will not have too many containers in the end because we keep $p$ small.\\

Formally, in Section \ref{RL}, we prove the following lemma

\begin{lemma}\label{recursivelemma}
Let $\gamma = \frac{1}{1-g}$, where $g$ is defined in Proposition \ref{containerlemma}.  For the hypergraph $H$ defined in Section \ref{formulation}, consider a subgraph $G \subseteq H$ induced by some subset of the vertices, where
$$Cn\gamma^{t-1} < |V(G)| \leq Cn\gamma^{t}$$
for some constant $C$ and some $t \geq t_0+1$ with $Cn\gamma^{t_0}=U$.  There exists a subgraph $G' \subseteq G$ on the same vertex set such that
$$\Delta_{\ell}(G') \le K\cdot p_t^{\ell-1}\cdot\frac{|E(G')|}{|V(G)|}$$
for all $\ell \in \{1,\cdots,k\}$, where $p_t = \frac{n}{t^{2+\epsilon}|V(G)|}$\\
\end{lemma}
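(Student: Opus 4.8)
The plan is to build $G'$ by a random sparsification of $G$: independently keep each edge of $G$ with probability $q$, for a suitable $q = q(t)$ to be chosen, and then show that with positive probability the resulting subgraph simultaneously (i) retains a constant fraction of the edges of $G$, and (ii) satisfies all the required $\Delta_\ell$ bounds. Concretely, I would first record the relevant parameters of $H$: since $H$ has $v(H) = n^2$ vertices and one edge for every copy of $\pi$ sitting on $k$ columns forming an edge of $\Lambda$, the number of edges is $e(H) = |E(\Lambda)| \cdot \binom{n}{k}$, which under the hypotheses on $\Lambda$ (every vertex in $\Theta(1)$ cliques of size $L$, each clique contributing $\binom{L}{k}$ column-sets) is on the order of $n L^{k-1} \cdot n^{k-1} = \Theta(n^k L^{k-1})$ up to constants depending on $k$. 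I would also bound $\Delta_\ell(H)$ for each $\ell$: fixing $\ell$ vertices $v(x_{i_1}, y_{j_1}), \dots, v(x_{i_\ell}, y_{j_\ell})$ of a would-be edge, the number of edges of $H$ containing them is at most the number of ways to extend to a full copy of $\pi$ on an edge of $\Lambda$; this is governed by how many edges of $\Lambda$ contain a fixed set of $\ell$ columns (at most $\Delta_{\ell}(\Lambda)$, which is $O(1)$ times a power of $L$), times the number of ways to choose the remaining $k - \ell$ rows, which is $O(n^{k-\ell})$.

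The key step is then to pass these estimates down to the induced subgraph $G$ on a vertex subset $W$ with $Cn\gamma^{t-1} < |W| \le Cn\gamma^t$, and then to the sparsified $G'$. For a random subgraph $G'$ obtained by retaining edges with probability $q$, we have $\mathbb{E}[|E(G')|] = q\,|E(G)|$ and $\mathbb{E}[\Delta_\ell(G')] \le q\,\Delta_\ell(G) \le q\,\Delta_\ell(H)$; more importantly, $\Delta_1(G')$ — the maximum degree — concentrates, and the higher $\Delta_\ell$ for $\ell \ge 2$ only shrink. The target inequality $\Delta_\ell(G') \le K p_t^{\ell-1} |E(G')| / |V(G)|$ rearranges, after substituting $\mathbb{E}$-values, into a condition of the shape $\Delta_\ell(H) \lesssim p_t^{\ell-1} |E(G)| / |V(G)|$; since $|E(G)|$ can be as small as a constant fraction of its expectation $\sim |E(H)| \cdot (|W|/n^2)^k$ (a fact one must establish, e.g. via the Marcus–Tardos / Füredi–Hajnal machinery of the previous sections or a direct second-moment / supersaturation argument — this is where Theorem~\ref{01matrices} is the right tool, guaranteeing that a vertex set of size $|W|$ spanning many rows/columns still contains $\gtrsim$ many copies of $\pi$ on $\Lambda$-edges), the whole thing reduces to checking the single numerical inequality obtained by plugging in $p_t = n/(t^{2+\epsilon}|V(G)|)$, $|V(G)| \approx Cn\gamma^t$, and the estimates on $\Delta_\ell(H)$ and $e(H)$. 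I would verify this inequality for each $\ell \in \{1, \dots, k\}$ separately; the $\ell = 1$ case (bounding the maximum degree after sparsification) requires a concentration bound — a Chernoff estimate on the degree of each vertex together with a union bound over the $\le |W|$ vertices — and the $\ell \ge 2$ cases are deterministic consequences of $\Delta_\ell(G') \le \Delta_\ell(G)$ once $q$ is fixed, since for $\ell \ge 2$ the factor $p_t^{\ell - 1}$ on the right is already small enough to absorb $\Delta_\ell(H)$.

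The main obstacle I anticipate is the $\ell = 1$ bound, i.e. controlling the maximum degree of $G'$ from above while keeping $|E(G')|$ from below: the degree of a vertex in $H$ can be as large as $\Theta(n^{k-1} L^{k-2})$ (its worst case far exceeds the average $e(H)/v(H) \approx n^{k-2} L^{k-1}$ when, say, a vertex lies in a row crossed by many $\Lambda$-edges), so $G$ itself need not satisfy the $\Delta_1$ hypothesis, and the role of the sparsification probability $q$ is precisely to shave every degree down to roughly the average. One must choose $q$ so that $q \cdot (\text{max degree of } G) \lesssim K p_t^0 |E(G')|/|V(G)| = K |E(G')|/|V(G)|$, while simultaneously $q \cdot |E(G)|$ stays a constant fraction of $|E(G)|$ is too strong — instead $q$ will be a genuinely decreasing function of $t$, and one shows only that $|E(G')| \gtrsim q |E(G)|$ with positive probability, which is enough because the target inequality is homogeneous of degree one in $|E(G')|$. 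Balancing these, together with the $t^{-(2+\epsilon)}$ slack built into $p_t$ (this slack is what ultimately lets the product of $|C|$ over the recursion converge), should pin down an admissible $q$ and close the argument; the appearance of the extra $\log^{2+\epsilon}$ factor in the final theorem is exactly the price of the $\sum_t t^{-(2+\epsilon)}$-type bookkeeping that this choice of $p_t$ forces.
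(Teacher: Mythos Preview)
Your approach has two genuine gaps that make it unworkable as stated.

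First, the lower bound on $|E(G)|$. You propose to use Theorem~\ref{01matrices} to show that a vertex set $W$ of size $|V(G)|$ spans many edges of $G$. But Theorem~\ref{01matrices} counts \emph{all} copies of $A_\pi$ in the $0\mhyphen 1$ matrix corresponding to $W$, whereas an edge of $H$ (hence of $G$) is a copy of $\pi$ whose $k$ columns form an edge of $\Lambda$. Since $\Lambda$ has edge density only $\Theta((L/n)^{k-1})$, there is no reason the $\pi$-copies guaranteed by Theorem~\ref{01matrices} land on $\Lambda$-edges; an adversarial container $W$ could place its vertices so that almost all $\pi$-copies straddle different $L$-cliques and are therefore not edges of $H$ at all. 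The paper handles this by working block by block: within a single $L$-clique every $k$-subset of columns \emph{is} an edge of $\Lambda$, so F\"uredi--Hajnal applies cleanly there. The decomposition into ``rich'' blocks (those with at least $\sqrt{Lt^{2+\epsilon}|V(G)|}$ vertices) and the verification that rich blocks account for a constant fraction of $|V(G)|$ whenever $|V(G)|>U$ is precisely the step your sketch is missing, and it is where the threshold $U=Cn^2\log^{2+\epsilon}n/L$ enters.

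Second, your treatment of $\Delta_\ell$ for $\ell\ge 2$ is incorrect. You assert that $\Delta_\ell(G')\le\Delta_\ell(G)\le\Delta_\ell(H)$ is already small enough to be absorbed by $Kp_t^{\ell-1}|E(G')|/|V(G)|$. But $\Delta_\ell(H)$ is of order $(nL)^{k-\ell}$ (fix $\ell$ vertices, extend columns within a common $L$-clique in $\sim L^{k-\ell}$ ways and rows in $\sim n^{k-\ell}$ ways), while the target, after substituting $|E(G')|\approx |V(G)|/(Kp_t^{k-1})$, is $p_t^{\ell-k}=(t^{2+\epsilon}|V(G)|/n)^{k-\ell}$. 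The required inequality $nL\lesssim t^{2+\epsilon}|V(G)|/n$ fails badly for polynomial $L$ throughout the range $U<|V(G)|\le n^2$. Uniform random edge sparsification cannot repair this: multiplying every codegree by the same $q$ cannot simultaneously meet bounds that scale as different powers of $p_t$. The paper instead builds $G'$ \emph{greedily within each rich block}, adding edges one at a time while tracking ``dangerous'' $\ell$-sets whose codegree is about to exceed half the target; a random-\emph{submatrix} (not random-edge) supersaturation argument then shows there are always enough non-dangerous edges to continue. This localized, codegree-aware construction is the mechanism your global sparsification lacks.
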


This lemma enables the proof of Theorem \ref{fixedtheorem} using the recursive hypergraph containers strategy.

\begin{thm}{\bf \ref{fixedtheorem}}
For every permutation $\pi$ and any $\epsilon > 0$, the number of $n$-permutations $\Lambda$-avoiding $\pi$ is $O\p{\frac{n\log^{2+\epsilon}n}{L}}^n$ as long as $\Lambda$ is $k$-uniform and satisfies the following:\\

$\Lambda$ contains a collection of $L$-vertex cliques where each of the $n$ vertices belongs to at least $\delta(\Lambda) \geq 1$ cliques in the collection and at most $\Delta(\Lambda) = O(1)$.
\end{thm}

\begin{proof}[Proof of Theorem \ref{fixedtheorem}]
As stated earlier in this section, to prove Theorem \ref{fixedtheorem} it is sufficient to prove that the hypergraph $H$ has at most $O\p{\frac{n\log^{2+\epsilon}n}{L}}^n$ independent sets of size $n$.\\

Lemma \ref{recursivelemma} shows that, for a general container $G$, we can apply the container lemma for a certain $p=p_t$ depending on the size of $G$, and further split $G$ into more containers.  Starting from the original graph $H$, we can repeat this process recursively until all of our containers have $\leq U = \frac{Cn^2 \log^{2+\epsilon} n}{L}$ vertices.  We are trying to count the number of independent sets of size $n$ in the original hypergraph and we know every independent set in the original graph is a subset of one of these containers.  Each container of size $\leq U$ has $\leq {U \choose n} \leq \p{\frac{eU}{n}}^n = O\p{\frac{n\log^{2+\epsilon}n}{L}}^n$ subsets of size $n$, and so the number of independent sets of size $n$ in this container is also bounded by this amount.  Therefore, all that remains to show is that the number of containers is singly exponential in $n$.  That is, upper bounded by $c^n$ for some $c=O(1)$.  If we can show this, then we will have

\begin{align*}
&(\text{number of size-$n$ independent sets in $H$})\\
&\leq \sum_{\text{containers }C}(\text{number of size-$n$ independent sets in $C$})\\
 &= (\text{number of conatiners})\cdot O\p{\frac{n\log^{2+\epsilon}n}{L}}^n\\
 & = c^n \cdot O\p{\frac{n\log^{2+\epsilon}n}{L}}^n  = O\p{\frac{n\log^{2+\epsilon}n}{L}}^n\\
\end{align*}

Say somewhere along this branching recursive process, we encounter a container $G$ that we want to split into further containers with $Cn\gamma^{t-1} < v(G) \leq Cn\gamma^{t}$ and $t \geq t_0+1$.  From Lemma \ref{recursivelemma}, we know we can apply the container lemma with $p=p_t = \frac{n}{t^{2+\epsilon}|V(G)|}$
and split $G$ into at most 
$$\p{\frac{e}{kp_t}}^{kp_tv(G)} = \p{\frac{et^{2+\epsilon}|V(G)|}{kn}}^{kn/t^{2+\epsilon}}  \leq \p{\frac{et^{2+\epsilon}Cn\gamma^t}{kn}}^{kn/t^{2+\epsilon}} = \p{\frac{et^{2+\epsilon}C\gamma^t}{k}}^{kn/t^{2+\epsilon}}$$
containers.  Additionally, we know that all of the resulting containers will contain at most $(1-\gamma)v(G) \leq Cn\gamma^{t-1}$ vertices.  We will subsequently break down these child containers using $p=p_s$ for some $s \leq t-1$.  Say $T=\log_\gamma(n/C)$ or equivalently $Cn\gamma^T = n^2$.  In the worst case, after we break up $H$ with $p=p_T$, we break up all of $H$'s child containers with $p=p_{T-1}$, all of $H$'s grandchild containers with $p=p_{T-2}$, etc all the way to $p=p_{t_0+1}$.  However, we can never encounter two consecutive generations of containers on which we apply the containers lemma with the same $p_t$; $t$ is always strictly decreasing.  Thus, the number of containers we have at the end is at most

$$\prod_{t=t_0+1}^T \p{\frac{e}{kp_t}}^{kp_tv(G)} \leq \prod_{t=t_0+1}^T \p{\frac{et^{2+\epsilon}C\gamma^t}{k}}^{kn/t^{2+\epsilon}}$$
Defining $A = \gamma \p{\frac{et^{2+\epsilon}C}{k}}^{1/t}$,
$$= \prod_{t=t_0+1}^T (A^t)^{kn/t^{2+\epsilon}} = \p{A^{k\sum_{t=t_0+1}^T \frac{1}{t^{1+\epsilon}}}}^n$$
We note that $A^{k\sum_{t=t_0+1}^T \frac{1}{t^{1+\epsilon}}}=O(1)$ as $\sum_{t=1}^\infty \frac{1}{t^{1+\epsilon}}$ is a convergent sum and $t^{(2+\epsilon)/t}$ has a finite upper bound for all $t$.  Moreover, $A$ is a constant in terms of $C$ and $\gamma$ which are only in terms of $\pi$ and $k$, and have no dependency on $n$.  Thus, the number of containers at the end of the branching process is singly exponential in $n$, as desired.
\end{proof}

\section{Block Decomposition of $G$}\label{RL}

We now demonstrate how to construct a subgraph $G'$ of $G$ that satisfies the preconditions of the containers lemma and enables the recursive argument.

\begin{lemma*}{\bf \ref{recursivelemma}}
Let $\gamma = \frac{1}{1-g}$, where $g$ is defined in Proposition \ref{containerlemma}.  For the hypergraph $H$ defined in Section \ref{formulation}, consider a subgraph $G \subseteq H$ induced by some subset of the vertices, where
$$Cn\gamma^{t-1} < |V(G)| \leq Cn\gamma^{t}$$
for some constant $C$ and some $t \geq t_0+1$ with $Cn\gamma^{t_0}=U$.  There exists a subgraph $G' \subseteq G$ on the same vertex set such that
$$\Delta_{\ell}(G') \le K\cdot p_t^{\ell-1}\cdot\frac{|E(G')|}{|V(G)|}$$
for all $\ell \in \{1,\cdots,k\}$, where $p_t = \frac{n}{t^{2+\epsilon}|V(G)|}$\\
\end{lemma*}

\begin{proof} Note that, for any $G'$ we construct, we will have $\Delta_{k}(G')=1$ as a set of $k$ vertices belonging to multiple edges would imply that we have duplicate edges.  So, in order to satisfy the condition for $\ell = k$, we have $1 \le K\cdot p_t^{k-1}\cdot\frac{|E(G')|}{|V(G)|}$, and so we must have $|E(G')| \geq \frac{|V(G)|}{Kp_t^{k-1}}$.  We define
$$N = \frac{|V(G)|}{Kp_t^{k-1}}$$
and so, we will construct a $G'$ with $|E(G')| \geq N$.  Therefore, it will be sufficient to construct a $G'$ satisfying
\begin{equation}\label{precond}
\Delta_{\ell}(G') \le K\cdot p_t^{\ell-1}\cdot\frac{N}{|V(G)|} = \frac{1}{p_t^{k - \ell}} 
\end{equation}
for all $1 \le \ell \le k$.\\

To construct a subset of the edges of $G$ satisfying \eqref{precond}, we must take advantage of some structural regularity of $G$.  However, on the whole, all we know about $G$ is roughly how many vertices it contains.  Yet, we can take advantage of the fact that, specifically on the vertices of $G$ corresponding to some clique in $\Lambda$, the edges are well-behaved and will lend themselves to an intricate construction.  Namely, for each of the $L$-cliques in $\Lambda$, we define its ``block" to be the subgraph of $G$ induced by the set of vertices in $V(G)$ belonging to the $L$ columns corresponding to this clique.  We will construct $G'$ by constructing a block subgraph $B'$ for every block $B$ and then taking $G'$ to be the union of all the $B'$.\\

We call a block $B$ rich if
$$|V(B)| \geq d = \sqrt{Lt^{2 + \epsilon}|V(G)|}$$
We will show that for every rich block $B$, there exists a subgraph $B'$ of $B$, on the same vertex set with 
 $$|E(B')|=N_B=\frac{2\Delta(\Lambda)}{\delta(\Lambda)}\cdot \frac{|V(B)|}{Kp_t^{k-1}}$$
We will also show that $B'$ can be chosen so that
$$\Delta_{\ell}(B') \le \frac{1}{\Delta(\Lambda)p^{k-\ell}}$$

If we can prove that such a $B'$ exists for every rich block $B$, then we can construct $G'$ by taking the union of all the $B'$.  We see that, for any collection of $\ell$ vertices $v_1,\cdots,v_\ell$,

$$\deg_{G'}(v_1,\cdots,v_\ell) \le \sum_{\text{rich blocks }B} \deg_{B'}(v_1,\cdots,v_\ell) \leq \frac{\Delta(\Lambda)}{\Delta(\Lambda)p^{k-\ell}}$$

since any collection of $\ell$ vertices, as well as any single vertex, belongs to at most $\Delta(\Lambda)$ blocks.  And so, $\Delta_{\ell}(G') \le \frac{1}{p_t^{k - \ell}}$ as desired.  We also see that we will have at least $N$ edges in the union of the $B'$ because

\begin{align*}
\card{\bigcup_{\text{rich blocks }B} E(B')} &\geq \frac{1}{\Delta(\Lambda)} \sum_{\text{rich blocks }B} \card{E(B')}\\
&= \frac{1}{\Delta(\Lambda)} \sum_{\text{rich blocks }B} N_B\\
&= \frac{1}{\Delta(\Lambda)} \sum_{\text{rich blocks }B} \p{\frac{2\Delta(\Lambda)}{\delta(\Lambda)Kp_t^{k-1}}} |V(B)|\\
&= \frac{2}{\delta(\Lambda)Kp_t^{k-1}} \cdot \sum_{\text{rich blocks }B} |V(B)|\\
\end{align*}

and we see

$$\sum_{\text{rich blocks }B} |V(B)| = \sum_{\text{blocks }B} |V(B)| - \sum_{\text{unrich blocks }B} |V(B)|$$

where

$$\sum_{\text{blocks }B} |V(B)| \geq \delta(\Lambda) |V(G)|$$

since each vertex belongs to at least $ \delta(\Lambda)$ blocks, and

$$\sum_{\text{unrich blocks }B} |V(B)| \leq d(\text{number of unrich blocks}) \leq d(\text{number of blocks})$$

Now, since $Cn\gamma^{t-1}<|V(G)| \leq n^2$

$$d=\sqrt{Lt^{2 + \epsilon}|V(G)|} < \sqrt{L|V(G)|\log_\gamma^{2 + \epsilon} \p{\frac{\gamma |V(G)|}{Cn}}} \leq \sqrt{L|V(G)|\log_\gamma^{2 + \epsilon} n}$$

for $C \geq \gamma$.  And since each of the $n$ vertices in $\Lambda$ belongs to at most $\Delta(\Lambda)$ of the size $L$ cliques, the number of $L$-cliques, which is the number of blocks, is at most $\Delta(\Lambda)n/L$.  So,

\begin{align*}
\sum_{\text{rich blocks }B} |V(B)| &\geq \sum_{\text{blocks }B} |V(B)| -d(\text{number of blocks})\\
& \geq \delta(\Lambda) |V(G)| - \p{\sqrt{L |V(G)| \log_\gamma^{2 + \epsilon} n}}(\Delta(\Lambda)n/L)\\
& \geq \delta(\Lambda) |V(G)| /2
\end{align*}

because

\begin{align*}
|V(G)| &\geq U = \frac{Cn^2 \log^{2+\epsilon} n}{L}\\
\therefore \sqrt{|V(G)|} &\geq \sqrt{\frac{Cn^2 \log^{2+\epsilon} n}{L}}\\
\therefore |V(G)| &\geq \frac{n}{L} \cdot \sqrt{|V(G)| \cdot CL \log^{2+\epsilon} n}\\
\therefore \delta(\Lambda) |V(G)| /2 &\geq \p{\sqrt{L |V(G)| \log_\gamma^{2 + \epsilon} n}}(\Delta(\Lambda)n/L)\\
\end{align*}

for $C \geq \frac{\Delta(\Lambda)^2}{(\delta(\Lambda)/2)^2\log(\gamma)}$, which is not in terms of $n$ and is therefore a valid bound on the constant $C$.  And so,

\begin{align*}
\card{\bigcup_{\text{rich blocks }B} E(B')} &\geq \frac{2}{\delta(\Lambda)Kp_t^{k-1}}  \cdot \sum_{\text{rich blocks }B} |V(B)|\\
&\geq \frac{2}{\delta(\Lambda)Kp_t^{k-1}} \cdot \delta(\Lambda) |V(G)| /2\\
& = N
\end{align*}

as desired.\end{proof}

\section{Supersaturation on the Rich Blocks} \label{SRB}

From the previous section, we showed that, to prove Lemma \ref{recursivelemma}, it was sufficient to show the following lemma about rich blocks.  Again, we define blocks to be the subgraph induced by the vertices of $G$ belonging to a certain collection of $L$ columns of the $n \times n$ grid.  These $L$ columns represent a clique in the avoidance hypergraph $\Lambda$.  So, for any $k$ of these $L$ columns $x_1 < x_2 < \cdots < x_k$ and any $1 \leq y_1< \cdots < y_k \leq n$, we have $\{v(x_1,y_{\pi(1)}),v(x_2,y_{\pi(2)}),\cdots,v(x_k,y_{\pi(k)})\} \in E(H)$.  We will have an edge in our block $B$ for every collection of $k$ such vertices that belong to $G$.  Now, we state the lemma.

\begin{lemma}
For a block subgraph $B \subseteq G \subseteq H$ with
$$|V(B)| \geq d =\sqrt{Lt^{2 + \epsilon}|V(G)|}$$
and
$$Cn\gamma^{t-1} < |V(G)| \leq Cn\gamma^{t}$$
for some $t \geq t_0+1$, there exists a subgraph $B' \subseteq B$ on the same vertex set such that
 \begin{equation}\label{NBRef}
 |E(B')|=N_B=\frac{2\Delta(\Lambda)}{\delta(\Lambda)}\cdot \frac{|V(B)|}{Kp_t^{k-1}}
 \end{equation}
and
\begin{equation}\label{delBound}
\Delta_{\ell}(B') \le \frac{1}{\Delta(\Lambda)p^{k-\ell}}
\end{equation}
for all $\ell \in \{1,\cdots,k\}$, where $p_t = \frac{n}{t^{2+\epsilon}|V(G)|}$ and $\gamma = \frac{1}{1-g}$, where $g$ is defined in Proposition \ref{containerlemma}.\\
\end{lemma}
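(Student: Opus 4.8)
The plan is to recast the block $B$ as a pattern‑matrix problem, use supersaturation to see that $B$ already contains at least $N_B$ edges, and then carve out a sufficiently uniform sub‑hypergraph $B'$ by a probabilistic argument.

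\emph{Reformulation.} Write $c_1<\cdots<c_L$ for the $L$ columns of the clique that defines the block, and let $M_B$ be the $L\times n$ $0\mhyphen 1$ matrix whose $(a,y)$ entry is $1$ exactly when $v(c_a,y)\in V(B)$. Because $\{c_1,\dots,c_L\}$ spans a clique of $\Lambda$, the condition ``$\{x_1,\dots,x_k\}\in E(\Lambda)$'' in the definition of $H$ is automatically satisfied inside the block, so the edges of $B$ correspond bijectively to the copies of $A_\pi$ in $M_B$, and $M_B$ has exactly $|V(B)|\ge d$ ones. Once this is in place the block lemma becomes: an $L\times n$ matrix with at least $d$ ones has a sub-collection of $N_B$ copies of $A_\pi$ whose $\ell$-th codegrees obey the prescribed bounds.

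\emph{Counting edges.} Exactly the sampling argument of Theorem \ref{01matrices} — sampling a uniformly random $r\times r$ submatrix of $M_B$ with $r\le L$ and applying the density inequality (\ref{Rbound}) coming from Lemma \ref{easybound} — yields a rectangular analogue of Theorem \ref{01matrices}: an $a\times c$ matrix with $a\le c$ and at least $C'c$ ones contains $\Omega\!\p{(\#\text{ones})^{2k-1}/(a^{k-1}c^{k-1})}$ copies of $A_\pi$. Applying this with $a=L$, $c=n$ (the hypothesis $|V(B)|\ge d$ forces $|V(B)|\ge C'n$ throughout the admissible range of parameters) gives $|E(B)|\gtrsim |V(B)|^{2k-1}/(L^{k-1}n^{k-1})$, and since $|V(B)|^2\ge d^2=Lt^{2+\epsilon}|V(G)|$ a one‑line computation shows this is at least $N_B=\frac{2\Delta(\Lambda)}{\delta(\Lambda)}\cdot\frac{|V(B)|}{Kp_t^{k-1}}$ provided the absolute constant $C$ in Theorem \ref{fixedtheorem} is taken large enough. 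This is precisely the calculation the richness threshold $d$ was engineered to make work.

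\emph{Extracting $B'$.} Two observations make the final step plausible. First, the container‑lemma constant $K$ may be chosen as large as we like (it only affects $g$), and for $K\ge C(k,\Delta(\Lambda),\delta(\Lambda))$ the target $M_\ell:=\frac1{\Delta(\Lambda)}Kp_t^{\ell-1}\frac{N}{|V(G)|}=\frac1{\Delta(\Lambda)}p_t^{\ell-k}$ comfortably exceeds the ``uniform'' $\ell$-codegree $\Theta\!\p{N_B/|V(B)|^{\ell}}$ that $B'$ would have if its $N_B$ edges were spread as evenly as possible — in fact, since $|V(B)|\ge d\ge p_t^{-1}$ in the relevant range, there is even polynomial slack for $\ell\ge 2$, while $\ell=1$ is absorbed into the constant and $\ell=k$ is vacuous because $\Delta_k$ is always $1$. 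Second, $M_\ell$ is large, at least $\Omega(t^{2+\epsilon}\log^{2+\epsilon}n)$ for $\ell\le k-1$, so Chernoff bounds will concentrate codegrees under a random sparsification. Thus the plan is: delete from $B$ every edge that meets an ``over‑dense'' set $S$ with $|S|\le k-1$ and $\deg_B(S)$ above a suitable multiple of $M_\ell\cdot|E(B)|/N_B$, keeping a sub‑hypergraph $B^{\#}$ with still $\gg N_B$ edges; retain each edge of $B^{\#}$ independently with probability $q=\Theta(N_B/|E(B^{\#})|)$; apply a Chernoff estimate together with a union bound over the at most $|V(B)|^{\ell}\le n^{2\ell}$ choices of $\ell$-set to see that all the codegree inequalities hold with positive probability; and finally discard edges down to exactly $N_B$. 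The main obstacle is the deletion step: bounding the number of edges lost when we excise the over‑dense neighbourhoods, equivalently ruling out a ``codegree spike'' in $B$ that concentrates a constant fraction of its $\gtrsim N_B$ edges near a few low‑order sets. I expect this is where the richness hypothesis is used a second time — a concentration of this kind exhibits a dense sub-block of $M_B$ to which rectangular supersaturation applies, forcing an edge count inconsistent with the calibration of $d$ — and checking that all the parameters ($p_t$, $N_B$, the deletion thresholds, and the range $t\le O(\log n)$) fit together, especially when $L$ is a large power of $n$, is the technical heart of the proof. These arguments closely parallel those of \cite{FMS}.
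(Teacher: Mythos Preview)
Your reformulation of the block as an $L\times n$ pattern matrix is correct, and your rectangular-supersaturation count $|E(B)|\gtrsim|V(B)|^{2k-1}/(Ln)^{k-1}\ge N_B$ does reduce precisely to the richness inequality $|V(B)|^2\ge d^2=Ln/p_t$; the paper uses this same inequality at the analogous spot.

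The extraction of $B'$ is where you diverge from the paper and where you have left a real gap. Your delete-then-sparsify plan hinges on showing that removing all edges through ``over-dense'' $\ell$-sets still leaves $\gg N_B$ edges, and you flag this yourself as ``the main obstacle'' without resolving it --- the appeal to richness ruling out a codegree spike via a dense sub-block is not an argument, and nothing in the setup prevents, say, a single column of $M_B$ from carrying a constant fraction of the ones and hence of the copies of $A_\pi$. The paper sidesteps this difficulty entirely by building $B'$ \emph{greedily} in the Morris--Saxton style: starting from the empty hypergraph, it adds one edge at a time, declaring an $\ell$-set \emph{dangerous} once its codegree in the current $B_i$ reaches $\frac{1}{\delta(\Lambda)}Kp_t^{\ell-1}\frac{N_B}{|V(B)|}$, and calling an edge \emph{safe} if it contains no dangerous proper subset. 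A double-count bounds the number of dangerous $\ell$-sets by $2^k\delta(\Lambda)|V(B)|/(Kp_t^{\ell-1})$, and the key claim --- that there are always at least $N_B$ safe edges available --- is proved by sampling a random \emph{submatrix} (columns with probability $q=4c_\pi n/|V(B)|$, rows with probability $Lq/n$, so that the sample is roughly square), deleting one vertex from each surviving dangerous set, and applying the bare F\"uredi--Hajnal bound $|E(R)|\ge|V(R)|-c_\pi x$ to the resulting $x\times x$ grid. Linearity of expectation then gives $Z(Lq^2/n)^k\ge\frac{Lq^2}{2n}|V(B)|-\sum_\ell|D_\ell|(Lq^2/n)^\ell$, and the richness inequality $|V(B)|^2\ge Ln/p_t$ closes the computation $Z\ge N_B$. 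The point is that this construction never needs to control codegrees in the full block $B$: it only needs the weak linear bound on the random subgrid after the few dangerous vertices are removed, which is exactly what F\"uredi--Hajnal supplies.
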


\begin{proof} From our definition of $N_B$ in \eqref{NBRef}, we can rewrite \eqref{delBound} as
$$\Delta_{\ell}(B') \le \frac{\delta(\Lambda)}{2\Delta(\Lambda)^2}\cdot \frac{Kp_t^{\ell-1} N_B}{|V(B)|}$$

We start our construction of $B'$ with the hypergraph $B_0$ on the vertices of $B$ with no edges.  We then iteratively construct $B_1,B_2,\cdots,B_{N_B}$ where we construct $B_{i+1}$ by adding an edge to $B_i$.  $B_{N_B}$ will be our $B'$.\\

For every $\ell \in [1,k-1]$ and every $i \in [0,N_B-1]$, we define the dangerous set $D_\ell(B_i)$ to be the set of all sets of $\ell$ vertices $\{v_1,\cdots,v_\ell\}$ where $$|\{E \in E(B_i) | \{v_1,\cdots,v_\ell\} \subseteq E\}| \geq \frac{\delta(\Lambda)}{2\Delta(\Lambda)^2}\cdot \frac{Kp_t^{\ell-1} N_B}{|V(B)|}-1$$

Now, we say that an edge $E \in E(B)$ is $i$-safe if $F \not \in D_{|F|}(B_i)$ for every nonempty, strict subset $F \subset E$. Our goal for all $i$ will be to construct $B_{i+1}$ by adding an $i$-safe edge to $B_i$ that is not already in $E(B_i)$.  If this is always possible, we see that, for all $\ell \in \{1,\cdots,k-1\}$,

\begin{align*}
\Delta_\ell(B_{i+1}) \leq \max\p{\Delta_\ell(B_i),  \frac{\delta(\Lambda)}{2\Delta(\Lambda)^2}\cdot \frac{Kp_t^{\ell-1} N_B}{|V(B)|}-1+1}
\end{align*}
and therefore, we can show inductively that the $B_{N_B}$ we construct will satisfy $\Delta_\ell(B_{N_B}) \leq   \frac{\delta(\Lambda)}{2\Delta(\Lambda)^2}\cdot \frac{Kp_t^{\ell-1} N_B}{|V(B)|}$ and be a valid choice for $B'$, as desired.\\

In order to show there is always an $i$-safe edge $E$ not already in $E(B_i)$, it is sufficient to show that the number of $i$-safe edges is $\geq N_B$, meaning that, by the pigeonhole principle, one of them is not already in $E(B_i)$.  Let $Z$ be the number of $i$-safe edges in $B$.  We want to show $Z \geq N_B$.  The vertices of $B$ belong to an $n \times L$ matrix grid.  We define $S$ to be the set of vertices in $B$ that belong to a random submatrix, selecting each column independently with probability $q$ and each row independently with probability $\frac{Lq}{n}$, for a fixed $q \in (0,1]$.  In expectation, we select $qL$ rows and $qL$ columns.  Also note that the probability that a single vertex is included in $S$ is $\frac{Lq^2}{n}$ as both its row and column need to be selected.\\

Then, we generate another vertex subset $S' \subseteq S$.  We start with $S' = S$ and iteratively scan $S'$ for subsets of vertices $F \in D_{|F|}(B_i)$.  If we find such an $F$, we remove one of the vertices in $F$ from $S'$ and restart the scanning process.  We terminate once no subset of $S'$ belongs to $D_{|F|}(B_i)$.  We note that the total number of vertices of $S$ that we deleted is at most the number of $F \in D_{|F|}(B_i)$ contained in $S$ originally, as some vertex deletions may have destroyed multiple dangerous $F$.\\

Now, we consider the subgraph $R$ induced by $S'$ and define the random variable $X$ to be the number of $i$-safe edges in $R$.  Since we removed a vertex from every dangerous $F$ in $S'$, there will be no dangerous $F$ in $V(R)$ and every edge in $R$ is $i$-safe.  So, we have $X = |E(R)|$.\\

The probability that any $i$-safe edge in $B$ belongs to $R$ is $\leq \p{\frac{Lq^2}{n}}^{k}$, as each of the $k$ vertices in the edge belongs to $S$ with probability $\frac{Lq^2}{n}$ independently, as they all occupy separate rows and columns.  So, by linearity of expectation, we can upper bound

$$\mathbb{E}[X] \leq Z\p{\frac{Lq^2}{n}}^{k}$$

Now, to show that $Z \geq N_B$, it remains to give a sufficiently strong lower bound for $\mathbb{E}[X]$.  The main tool for during so is the F\"uredi-Hajnal conjecture (proved in 2004) which also formed the backbone of the original proof of the Stanley-Wilf conjecture.  The F\"uredi-Hajnal conjecture \cite{FH} states that any $0\mhyphen 1$ matrix $A$ of size $x \times x$ that avoids a permutation matrix $P$ can have at most $c_Px$ 1-entries, for a constant $c_P$ only in terms of $P$.  The direct implication of this theorem in the hypergraph setting is this.  For a hypergraph with an $x \times x$ grid of vertices, and edges corresponding to the copies of $P$ on this grid, any independent set of this graph has at most $c_Px$ vertices.  Using this, we can lower bound the number of edges in $R$ using a supersaturation argument.\\

Let $x = \max(\text{number of rows selected in }S, \text{number of columns selected in }S)$.  So, all of the vertices in $R$ belong to an $x \times x$ subgrid.  We claim that, by F\"uredi-Hajnal, $|E(R)| \geq |V(R)|-c_Px$.  While $R$ has more than $c_Px$ vertices, we can find an edge in $R$ and delete one of the vertices in that edge.  This decreases $|V(R)|$ by 1, and decreases $|E(R)|$ by at least 1.  Repeating this process until the number of vertices left in $R$ is $c_Px$, we must have removed at least $|V(R)|-c_Px$ edges which were originally in $R$.  Thus, by linearity of expectation,

$$\mathbb{E}[|E(R)|] \geq \mathbb{E}[|V(R)|-c_Px] = \mathbb{E}[|V(R)|]-c_P\mathbb{E}[x]$$
Now, 
\begin{align*}
\mathbb{E}[|V(R)|] &= \mathbb{E}[|S'|] = \mathbb{E}[|S| - \text{at most 1 for each dangerous set in }S]\\
&\geq \mathbb{E}[|S|] - \sum_{\ell = 1}^{k-1}\sum_{F \in D_\ell(B_i)} \text{Pr}[F \subseteq S]\\
&= \frac{Lq^2}{n} |V(B)| - \sum_{\ell = 1}^{k-1} |D_\ell(B_i)| \cdot \p{\frac{Lq^2}{n}}^{\ell}
\end{align*}
 and 
 \begin{align*}
 \mathbb{E}[x] &= \mathbb{E}[\max(\text{number of rows selected}, \text{number of columns selected})]\\
 &\leq \mathbb{E}[\text{number of rows selected}]+\mathbb{E}[ \text{number of columns selected}]\\
 &= \frac{Lq}{n}\cdot n + q\cdot L = 2qL
 \end{align*}
Therefore,
$$Z\p{\frac{Lq^2}{n}}^{k} \geq \mathbb{E}[|E(R)|] \geq \frac{Lq^2}{n} |V(B)| - \sum_{\ell = 1}^{k-1} |D_\ell(B_i)| \cdot \p{\frac{Lq^2}{n}}^{\ell} - 2qc_PL$$

We have that $|V(B)| > U = Cn\gamma^{t_0}$ for some constant $C$.  So, we take $C > 4c_P$, which is only in terms of $\pi$ and is therefore a valid constraint on $C$.  Setting $q = \frac{4c_Pn}{|V(B)|} < 1$, we have

$$\frac{Lq^2}{n} |V(B)| - 2qc_PL \geq \frac{Lq^2}{2n} |V(B)|$$
and

$$Z\p{\frac{Lq^2}{n}}^{k} \geq \mathbb{E}[|E(R)|] \geq \frac{Lq^2}{2n} |V(B)| - \sum_{\ell = 1}^{k-1} |D_\ell(B_i)| \cdot \p{\frac{Lq^2}{n}}^{\ell}$$

So, in order to show $Z \geq N_B$, it is sufficient to show

\begin{equation}\label{suffshow}
\frac{Lq^2}{2n} |V(B)| - \sum_{\ell = 1}^{k-1} |D_\ell(B_i)| \cdot \p{\frac{Lq^2}{n}}^{\ell} \geq N_B\p{\frac{Lq^2}{n}}^{k}=  \frac{2\Delta(\Lambda)}{\delta(\Lambda)}\cdot \frac{|V(B)|}{Kp_t^{k-1}}\p{\frac{Lq^2}{n}}^{k}
\end{equation}

We can bound $|D_\ell(B_i)|$ by double counting $F,E$ pairs where $$F = \{v_1,\cdots,v_\ell\} \subseteq E \in E(B_i)$$

For an upper bound, we know there are $i \leq N_B$ ways to choose $E \in E(B_i)$ and there are ${k \choose \ell} \leq 2^k$ ways to choose an $F$ belonging to that $E$.  For a lower bound, each $F \in D_{\ell}(B_i)$ belongs to at least $\frac{\delta(\Lambda)}{2\Delta(\Lambda)^2}\cdot \frac{Kp_t^{\ell-1} N_B}{|V(B)|} - 1$ many edges and each $F \not \in D_{\ell}(B_i)$ belongs to at least 0 edges.  So,

$$2^kN_B \geq \text{number of }F,E\text{ pairs} \geq |D_\ell(B_i)|\p{ \frac{\delta(\Lambda)}{2\Delta(\Lambda)^2}\cdot \frac{Kp_t^{\ell-1} N_B}{|V(B)|} - 1}$$
$$\therefore |D_\ell(B_i)| \leq \frac{2^{k+2}\Delta(\Lambda)^2|V(B)|}{\delta(\Lambda)Kp_t^{\ell-1}}$$
as $\frac{\delta(\Lambda)}{2\Delta(\Lambda)^2}\cdot \frac{Kp_t^{\ell-1} N_B}{|V(B)|} - 1 \geq \frac{\delta(\Lambda)}{4\Delta(\Lambda)^2}\cdot \frac{Kp_t^{\ell-1} N_B}{|V(B)|}$ for sufficiently large $K$.  Inserting this bound into \eqref{suffshow}, it is sufficient to show

$$\frac{Lq^2}{2n} |V(B)| - \sum_{\ell = 1}^{k-1} \frac{2^{k+2}\Delta(\Lambda)^2|V(B)|}{\delta(\Lambda)Kp_t^{\ell-1}} \cdot \p{\frac{Lq^2}{n}}^{\ell} \geq \frac{2\Delta(\Lambda)}{\delta(\Lambda)}\cdot \frac{|V(B)|}{Kp_t^{k-1}}\p{\frac{Lq^2}{n}}^{k}$$
dividing through by $\frac{Lq^2|V(B)|}{n}$
$$\frac{1}{2}  - \frac{2^{k+2}\Delta(\Lambda)^2}{\delta(\Lambda)K}  \sum_{\ell = 1}^{k-1} \p{\frac{Lq^2}{np_t}}^{\ell-1} \geq \frac{2\Delta(\Lambda)}{K\delta(\Lambda)}\p{\frac{Lq^2}{np_t}}^{k-1}$$
and so we want
\begin{equation}\label{finalboy}
K \geq \frac{2^{k+3}\Delta(\Lambda)^2}{\delta(\Lambda)} \sum_{\ell = 0}^{k-1} \p{\frac{Lq^2}{np_t}}^{\ell}
\end{equation}
Lastly, since we set $q = \frac{4c_Pn}{|V(B)|}$, we have $\frac{Lq^2}{np_t} = \frac{16c_P^2Ln}{p_t|V(B)|^2}$. And since $B$ is given to be a rich block, we have
$$|V(B)|^2 \geq Lt^{2 + \epsilon}|V(G)|=Ln/\p{\frac{n}{t^{2 + \epsilon}|V(G)|}} = \frac{Ln}{p_t}$$

Thus, $\frac{Lq^2}{np_t}  \leq 16c_P^2$ and we can set $K = \frac{2^{k+3}\Delta(\Lambda)^2}{\delta(\Lambda)} \cdot \frac{(16c_P^2)^k-1}{16c_P^2-1}$, which satisfies \eqref{finalboy} and is not in terms of $n$, making it a valid definition for the constant $K$.
\end{proof}

\section{Conclusion}\label{conclusion}

We have managed to show that the number of $n$-permutations $\Lambda$-avoiding $\pi$ is $O\p{\frac{n\log^{2+\epsilon}n}{L}}^n$ only relying on the fact that $\Lambda$ contains a certain collection of size-$L$ cliques.  This bound holds for positive $\epsilon$ arbitrarily close to $0$. When $L$ is polynomial in $n$, that is $L=\Theta(n^c)$ with $c \in (0,1]$, this bound is a strict improvement on the $n!=O(n)^n$ total $n$-permutations.  For $L=n$, we are a log-exponential factor off from the Stanley-Wilf conjecture.\\

Our matching bound for when $\Lambda$ is a random hypergraph with edge probability $\alpha$ of $\exp(O(n))\alpha^{-\frac{n}{k-1}}$ is therefore more general in many ways, as there are no cliques of polynomial size in $n$ w.h.p. in such a random graph.  This is expected as the weakest part of our argument came from the deterministic nature of $\Lambda$.  When we are bounding the sum of the vertices in the rich blocks,

$$\sum_{\text{rich blocks }B} |V(B)| = \sum_{\text{blocks }B} |V(B)| - \sum_{\text{unrich blocks }B} |V(B)|$$

\noindent the best bound for the unrich blocks 
$$\sum_{\text{unrich blocks }B} |V(B)| \leq d(\text{number of unrich blocks}) \leq d(\text{number of blocks})$$

assumes that all the blocks are unrich, accounting for the worst deterministic case.  When the locations of the blocks are randomized, we can make a stronger statement in expectation.  However, such a reliance on large cliques in the fixed $\Lambda$ case is necessary to achieve any meaningful bound, as we showed there are dense multipartite hypergraphs $\Lambda^*$ which are avoided by $O(n)^n$ permutations of length $n$, but which have constant maximal clique.  This gives us hope that the conditions we place on the fixed $\Lambda$ are relatively tight.\\

An open problem is to remove the $\log^{2+\epsilon}n$ term from the bound.  The term comes from the use of hypergraph containers in a recursive branching fashion.  Each container in the tree is broken down using the containers lemma as a black box, necessitating this term.  It may be removable by reworking the arguments of the containers lemma to tailor to this recursive usage, which would improve our bound especially for $L=\Theta(n)$.

\section{Acknowledgements}
The authors would like to thank Asaf Ferber for his mentorship throughout this research.  His teachings and advice were invaluable. They would also like to thank the reviewer for their helpful advice on improving the paper.

\end{document}